\documentclass[11pt]{amsart}

\usepackage[T1]{fontenc}
\usepackage[utf8]{inputenc}
\usepackage{lmodern}
\usepackage{microtype}
\usepackage[margin=1in]{geometry}
\usepackage{amsmath,amssymb,amsthm,mathtools}
\usepackage{mathrsfs}
\usepackage{enumitem}
\usepackage{xcolor}
\usepackage[colorlinks=true,linkcolor=blue,citecolor=blue,urlcolor=blue]{hyperref}
\usepackage[nameinlink,noabbrev]{cleveref}

\crefname{theorem}{theorem}{theorems}
\Crefname{theorem}{Theorem}{Theorems}
\crefname{proposition}{proposition}{propositions}
\Crefname{proposition}{Proposition}{Propositions}
\crefname{lemma}{lemma}{lemmas}
\Crefname{lemma}{Lemma}{Lemmas}
\crefname{corollary}{corollary}{corollaries}
\Crefname{corollary}{Corollary}{Corollaries}
\crefname{definition}{definition}{definitions}
\Crefname{definition}{Definition}{Definitions}
\crefname{remark}{remark}{remarks}
\Crefname{remark}{Remark}{Remarks}
\crefname{question}{question}{questions}
\Crefname{question}{Question}{Questions}

\newtheorem{theorem}{Theorem}[section]
\newtheorem{proposition}[theorem]{Proposition}
\newtheorem{lemma}[theorem]{Lemma}

\theoremstyle{definition}
\newtheorem{definition}[theorem]{Definition}
\newtheorem{remark}[theorem]{Remark}

\newcommand{\ZF}{\mathrm{ZF}}
\newcommand{\DC}{\mathrm{DC}}
\newcommand{\BPI}{\mathrm{BPI}}

\newcommand{\ACfin}{\mathrm{AC}_{\mathrm{fin}}}
\newcommand{\Cn}{\mathrm{C}_n}
\newcommand{\Ctwo}{\mathrm{C}_2}
\newcommand{\Cthree}{\mathrm{C}_3}
\newcommand{\HS}{\operatorname{HS}}
\newcommand{\Fix}{\operatorname{Fix}}
\newcommand{\Pow}{\mathcal P}
\newcommand{\Fin}{\mathrm{Fin}}
\newcommand{\omegaone}{\omega_1}
\newcommand{\Succ}{\operatorname{Succ}}
\newcommand{\clrho}{\operatorname{cl}_{\rho}}
\newcommand{\Star}{\operatorname{Star}}
\newcommand{\suppnode}{\operatorname{supp}_{\mathrm{node}}}
\newcommand{\Ftwo}{\mathbb F_2}
\newcommand{\forces}{\Vdash}
\newcommand{\Gcas}{\mathscr G^{\mathrm{1cas}}_{\rho}}
\newcommand{\Fcas}{\mathscr F^{\mathrm{1cas}}_{\rho}}
\newcommand{\Scas}{\mathcal S^{\mathrm{1cas}}_{\rho}}
\newcommand{\Ncas}{\mathcal N^{\mathrm{1cas}}_{\rho}}

\title[A One-Step Cascade Symmetric Model]{A One-Step Cascade Symmetric Model:\
Rank-$1$ Packets, Binary Shielding, and the Even Exact-Cardinality Profile}

\author{Frank Gilson}
\date{March 26, 2026}

\begin{document}

\begin{abstract}
We introduce a one-step cascade symmetric system whose local symmetry geometry is organized by finite $\rho$-closed windows and one-step stars rather than by rowwise-independent toggles. The resulting symmetric model isolates a new $\ZF+\DC+\neg\BPI$ geometry in which rank-$1$ hereditarily symmetric reals admit a packet normalization theorem over countable $\rho$-closed supports.

The technical center of the paper is the finite star-span lemma and the associated rank-$1$ packet calculus. From this we obtain a normalization theorem and a two-layer coding consequence for rank-$1$ reals (in the metatheory, via a well-orderable base of packets). We then apply the same binary fresh-support shielding pattern to prove $\neg\Ctwo$, hence $\neg\ACfin$, and therefore the failure of every even $\Cn$ (where $\Cn$ denotes the principle that every family of nonempty $n$-element sets admits a choice function). On the odd side, the present bounded packet calculus remains dyadic: support-fixed local actions factor through finite $2$-groups, bounded support-equivariant quotients of finite local orbits have power-of-two size, and trace-separated bounded rigid ternary families admit canonical selectors within a fixed finite trace window. Accordingly, the odd exact-cardinality profile remains open beyond the current local binary machinery.
\end{abstract}

\maketitle

\section{Introduction}

Symmetric extensions are a central source of choiceless models, but the geometry of the symmetry can be as important as the forcing itself. The purpose of this paper is to isolate a specific one-step cascade geometry in which the usable support analysis is governed by finite $\rho$-closed windows and one-step stars rather than by rowwise-independent toggles. This geometry supports three distinct kinds of statements: a direct proof that the Boolean Prime Ideal Theorem ($\BPI$) fails, a finite-window linear algebra for rank-$1$ names, and an exact-cardinality failure theorem on the even side. The relevant structural background comes from the classical theory of intermediate models and symmetric extensions, together with the modern framework of symmetric systems and their equivalences; see, for example, \cite{Grigorieff1975,Karagila2019Iterating,KaragilaSchilhan2026Theory}.

The technical center of the paper is the finite star-span lemma (\Cref{lem:starspan}). It gives the correct local replacement for rowwise-independent toggles and leads to a packet calculus for rank-$1$ hereditarily symmetric reals, by which we mean hereditarily symmetric $\mathbb P_1$-names for subsets of $\omega$. That packet calculus yields the normalization theorem of \Cref{thm:rank1norm} and the two-layer coding consequence of \Cref{thm:twolayer}, placing the model in a natural Kinna--Wagner context; compare \cite{KaragilaSchilhan2026IMKWP}.

The weak-choice consequences split into two sharply different parts. On the even side, the binary shielding mechanism gives a direct proof of $\neg\Ctwo$, hence $\neg\ACfin$, and then of $\neg\Cn$ for every even $n\geq 2$ by the divisibility propagation of \Cref{prop:div}. On the odd side, beginning with $\Cthree$, the current local machinery yields only boundary theorems: local actions remain $2$-group actions, bounded support-equivariant quotients remain dyadic, and trace-separated bounded rigid ternary families admit canonical selectors within a fixed finite trace window. Thus the paper proves a genuine even exact-cardinality profile while treating the odd side as an explicitly open boundary. Standard background for these choice principles may be found in \cite{Jech1973AC,Jech2003,HowardRubin1998}.

Our main theorem may be summarized as follows.

\begin{theorem}[Main theorem]\label{thm:main}
Let $\Ncas$ be the one-step cascade symmetric model defined in \Cref{sec:model}. Then:
\begin{enumerate}[label=\textup{(\alph*)},leftmargin=2.4em]
    \item $\Ncas\models \ZF+\DC+\neg\BPI$.
    \item Every rank-$1$ hereditarily symmetric real in $\Ncas$ normalizes to a countable packet scheme over a countable $\rho$-closed support, and hence admits, in the metatheory via a well-orderable base of packets, a two-layer code by such a support together with a countable family of exact finite cascade packets.
    \item $\Ncas\models \neg\Ctwo$, hence $\Ncas\models \neg\ACfin$.
    \item For every even $n\geq 2$, $\Ncas\models \neg\Cn$.
\end{enumerate}
In addition, \Cref{prop:nonconj} records the geometric fact that $\Gcas$ contains elements with stationary row-support and therefore with uncountable total support, while \Cref{sec:odd} explains why the present bounded packet calculus leaves the odd side beginning with $\Cthree$ open.
\end{theorem}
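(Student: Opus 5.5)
This theorem summarizes results proved later, so the plan is to assemble its four clauses from the component results of the later sections, in the order (a), (b), (c), (d).

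\textbf{(a) $\ZF+\DC+\neg\BPI$.}
\begin{itemize}[leftmargin=1.5em]
\item $\ZF$ comes from the standard symmetry lemma, once we check that $\Fcas$ is a normal filter of subgroups of $\Gcas$.
\item $\DC$ comes from the $\omega_1$-completeness of $\Fcas$, which holds because supports are countable $\rho$-closed sets and countable unions of such remain countable and $\rho$-closed. Combined with the countable closure of $\mathbb P_1$, a standard argument reflects every countable sequence of hereditarily symmetric names into a single support.
\item $\neg\BPI$ comes from the binary shielding construction. I would exhibit a symmetric family of two-element sets (pairs of generic reals at a node together with their one-step toggles) and a Boolean algebra built from it. An ultrafilter would require a symmetric name deciding, at a fresh node outside a countable support $E$, which member of each pair it selects. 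A star automorphism fixing $E$ but swapping at that fresh node yields the contradiction. The finite star-span lemma (\Cref{lem:starspan}) guarantees that such an automorphism exists inside $\Fix(E)$.
\end{itemize}

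\textbf{(b) Normalization and two-layer code.} This clause is exactly \Cref{thm:rank1norm} followed by \Cref{thm:twolayer}. The route:
\begin{itemize}[leftmargin=1.5em]
\item Fix a hereditarily symmetric name $\dot x$ for a subset of $\omega$ with countable $\rho$-closed support $E$.
\item For each $n$, the maximal antichain deciding $n\in\dot x$ can be taken $\Fix(E)$-invariant.
\item By \Cref{lem:starspan}, the $\Fix(E)$-orbits of conditions restricted to finite $\rho$-closed windows are spanned by finitely many one-step stars. Each decision therefore depends only on the exact finite cascade packet on a finite window over $E$.
\item This gives the countable packet scheme, indexed by $n$ and windows.
\item Choosing a well-order of the packet base in the metatheory then produces the two-layer code: the support $E$ together with the countable list of packets.
\end{itemize}

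\textbf{(c) and (d).}
\begin{itemize}[leftmargin=1.5em]
\item For (c), the same shielding pattern shows directly that the symmetric family of pairs $\{\dot a_t,\dot a_t^{\ast}\}$, indexed by nodes $t$, has no choice function in $\Ncas$. Any purported selector has some support $E$. Pick a node $t$ fresh for $E$ and apply the toggle automorphism at $t$, which lies in $\Fix(E)$ by star-span. This swaps the pair at $t$ while fixing the selector's name, which is impossible.
\item $\neg\ACfin$ follows at once from $\neg\Ctwo$.
\item For (d), divisibility propagation (\Cref{prop:div}) gives: if $m\mid n$, then $\Cn$ implies $\mathrm{C}_m$. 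This uses the standard product trick: replace each $m$-set $A$ by $A\times\{0,\dots,n/m-1\}$ and project a chosen element back to $A$. With $m=2$, every even $n$ satisfies $\Cn\Rightarrow\Ctwo$, so $\neg\Cn$ follows from (c).
\item The additional remarks in the theorem are simply citations of \Cref{prop:nonconj} and \Cref{sec:odd}.
\end{itemize}

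\textbf{Main obstacle.} The hardest step is verifying that the toggle at a fresh node lies in $\Fix(E)$ without disturbing $E$. This is where the $\rho$-closure of $E$ matters, because the one-step stars could otherwise propagate into $E$. I would first check that the star of a node outside $\clrho(E)$ is disjoint from $E$, and then apply \Cref{lem:starspan} within that window.
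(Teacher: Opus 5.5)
Your assembly of (b) and (d) from \Cref{thm:rank1norm}, \Cref{thm:twolayer} and \Cref{prop:div} agrees with the paper. Your sketches of $\neg\BPI$ and $\neg\Ctwo$, however, use the wrong tool for the key automorphism and omit the step that actually produces the contradiction.

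\Cref{lem:starspan} only realizes \emph{finite} bit-difference patterns, i.e.\ finite products of single-bit generators $\tau^{\mathrm{1cas}}_{\xi,i,\{n\}}$, and these alter each real on only finitely many bits. For a family of pairs to be symmetric at all, its members must be insensitive to such finite toggles. A pair of actual reals such as $\{c_{t,i},\omega\setminus c_{t,i}\}$, or $c_{t,i}$ together with a fixed toggle of it, is pushed out of the family by a single-bit toggle at a node outside any given countable support, so that family is not symmetric. This is why the paper works modulo finite, with $Q^i_{\beta,\gamma}=\{X^i_{\beta,\gamma},\neg X^i_{\beta,\gamma}\}\subseteq\Pow(\omega)/\Fin$. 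But modulo finite, every automorphism supplied by star-span acts trivially, so it can never exchange the two members of a pair. The swap needs a single generator $\tau^{\mathrm{1cas}}_{\beta,i,s}$ with $s$ \emph{cofinite}.

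Moreover, a $\tau\in\Fix^{\mathrm{1cas}}_{\rho}(E)$ that fixes $\dot f$ and swaps $Q$ is not contradictory by itself. You must first pick $q\le p$ deciding $\dot f(Q)$ (resp.\ deciding whether $X\in\dot U$) and then arrange $\tau(q)=q$. Only then does $q\forces \dot f(Q)=X$ transport to $q\forces\dot f(Q)=\neg X$. For cofinite $s$ this is not automatic. It is exactly the finite-shielding step of \Cref{lem:finite-shielding}: choose $s\subseteq\omega\setminus C(q,\beta,i)$, which is possible because $C(q,\beta,i)$ is finite. This ordering (fresh $\beta$, then $q$, then $s$) is the heart of \Cref{thm:base} and \Cref{thm:no-selector-Bi}, and it is missing from your proposal.

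Relatedly, what you call the main obstacle is actually immediate. Your observation that $\beta\notin E$ with $E$ $\rho$-closed forces $\Succ_\rho(\beta)\cap E=\varnothing$ is correct; it is the content of \Cref{lem:fresh-separation}. It already puts every $\tau^{\mathrm{1cas}}_{\beta,i,s}$ into $\Fix^{\mathrm{1cas}}_{\rho}(E)$, with no appeal to star-span. With single-node pairs $\{[c_{t,i}]_{\Fin},\neg[c_{t,i}]_{\Fin}\}$ the corrected argument goes through verbatim. The paper's two-node pairs additionally need $\gamma\neq\beta$ and $\gamma\notin\Succ_\rho(\beta)$, so that row $(\gamma,i)$ is untouched.

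Finally, your $\DC$ argument invokes countable closure of $\mathbb P_1$. But $\mathbb P_1=\operatorname{Fn}(\omegaone\times\omega\times\omega,2,{<}\omega)$ is finite-condition Cohen forcing; it adds reals and is not countably closed. The paper instead uses the ccc (\Cref{rem:ccc}) together with normality and $\omega_1$-completeness of $\Fcas$ (\Cref{prop:filter-normal,prop:filter-omega1}). Countable antichains let you mix symmetric names, since each condition $q$ is fixed by $\Fix^{\mathrm{1cas}}_{\rho}(\clrho(\suppnode(q)))$. Then $\omega_1$-completeness gives a single support for the resulting $\omega$-sequence.
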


The organization of the paper is as follows. \Cref{sec:model} defines the one-step cascade symmetric system, proves the base theorem, and records the intrinsic geometric observation that the cascade group has elements with stationary row-support and uncountable total support. \Cref{sec:starspan} proves the finite star-span lemma. \Cref{sec:rank1} develops the rank-$1$ packet calculus, normalization theorem, and two-layer coding consequence. \Cref{sec:even} proves the even exact-cardinality theorem bank. \Cref{sec:odd} records the odd-side structural boundary. \Cref{sec:open} collects the main questions left open by the present packet calculus. \Cref{sec:related} situates the construction relative to the recent theory of symmetric systems, dependent choice in symmetric extensions, and intermediate-model/Kinna--Wagner results.

\section{The one-step cascade system and the base theorem}\label{sec:model}

We begin with the Stage~0 predecessor map and the Stage~1 forcing. The key feature is that the automorphisms toggle one row together with the matching rows above its immediate successors in the predecessor forest. The point is to retain the fresh-support contradiction pattern used in the anti-$\BPI$ argument while replacing arbitrary countable row sets by countable $\rho$-closed node sets.

\begin{definition}[Stage~0 forcing and successor notation]
Let $\mathbb P_0$ be the forcing of finite partial regressive functions on $\omegaone$. If $G_0\subseteq\mathbb P_0$ is $V$-generic, write
\[
\rho:\omegaone\setminus\{0\}\to\omegaone
\]
for the induced total regressive predecessor map, write
\[
\Succ_\rho(\xi)=\{\eta<\omegaone:\rho(\eta)=\xi\},
\]
and write $\clrho(A)$ for the $\rho$-closure of $A\subseteq\omegaone$.
\end{definition}

\begin{definition}[Stage~1 forcing and one-step cascade generators]
In $V[G_0]$, let
\[
\mathbb P_1=\operatorname{Fn}(\omegaone\times\omega\times\omega,2,{<}\omega),
\]
with generic reals $c_{\xi,i}\in 2^\omega$ for $\xi<\omegaone$ and $i<\omega$. For $\xi<\omegaone$, $i<\omega$, and $s\subseteq\omega$ which is finite or cofinite, let
\[
\tau^{\mathrm{1cas}}_{\xi,i,s}
\]
denote the automorphism which toggles the row $(\xi,i)$ along $s$, toggles every row $(\eta,i)$ with $\eta\in\Succ_\rho(\xi)$ along the same set $s$, and fixes all other rows. Let $\Gcas$ be the group generated by these automorphisms.
\end{definition}

\begin{definition}[The symmetric system]
For $A\subseteq\omegaone$, let $\Fix^{\mathrm{1cas}}_{\rho}(A)$ be the subgroup of $\Gcas$ acting trivially on every row above $\clrho(A)$. Let $\Fcas$ be the filter of subgroups of $\Gcas$ generated by the subgroups $\Fix^{\mathrm{1cas}}_{\rho}(A)$ for countable $A\subseteq\omegaone$. We write
\[
\Scas=(\mathbb P_1,\Gcas,\Fcas)
\]
for the resulting symmetric system.
\end{definition}

\begin{remark}
Allowing cofinite toggle sets is needed for the complement-flip automorphisms used in the anti-$\BPI$ and no-selector arguments below. The finite local linear algebra of \Cref{lem:starspan} is unchanged, since it uses only the single-bit generators $\tau^{\mathrm{1cas}}_{\xi,i,\{n\}}$.
\end{remark}
\begin{remark}\label{rem:ccc}
Both $\mathbb P_0$ and $\mathbb P_1$ satisfy the countable chain condition. For $\mathbb P_1=\operatorname{Fn}(\omegaone\times\omega\times\omega,2,{<}\omega)$ this is the usual $\Delta$-system argument for finite-support Cohen forcing: an uncountable antichain would have an uncountable subfamily with pairwise disjoint finite domains, hence pairwise compatible conditions, a contradiction. The same argument applies to the finite partial regressive forcing $\mathbb P_0$.
\end{remark}

\begin{lemma}[Fresh one-step separation]\label{lem:fresh-separation}
Let $A\subseteq\omegaone$ be countable and $\rho$-closed. Then there exist ordinals $\beta,\gamma<\omegaone$ such that
\[
\beta,\gamma\notin A,
\qquad
\beta\neq\gamma,
\qquad
\gamma\notin\Succ_\rho(\beta),
\qquad\text{and}\qquad
\Succ_\rho(\beta)\cap A=\varnothing.
\]
\end{lemma}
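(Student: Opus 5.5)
The proof is elementary and uses only that $A$ is countable, that $A$ is closed under $\rho$, and that $\rho$ is regressive. No genericity of $\rho$ is needed. Throughout I read ``$\rho$-closed'' as $A=\clrho(A)$, i.e.\ $\rho(\eta)\in A$ whenever $\eta\in A\setminus\{0\}$.

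\textbf{Step 1: choose the ordinals.} By \Cref{rem:ccc}, $\mathbb P_0$ is ccc, so $\omegaone$ is preserved in $V[G_0]$. Since $A$ is countable, $\omegaone\setminus A$ is uncountable. In particular it has at least two elements. Pick $\gamma<\beta$ with both in $\omegaone\setminus A$; for definiteness, let $\gamma$ and $\beta$ be its two least elements. Then $\beta,\gamma\notin A$ and $\beta\neq\gamma$ hold by construction.

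\textbf{Step 2: $\gamma\notin\Succ_\rho(\beta)$.} If $\gamma=0$, then $\rho(\gamma)$ is undefined, so $\gamma$ lies in no successor set. If $\gamma>0$, regressiveness gives $\rho(\gamma)<\gamma<\beta$, so $\rho(\gamma)\neq\beta$.

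\textbf{Step 3: $\Succ_\rho(\beta)\cap A=\varnothing$.} This is where closure is used. Suppose $\eta\in\Succ_\rho(\beta)\cap A$. Then $\eta\neq0$ and $\rho(\eta)=\beta$. Closure of $A$ under $\rho$ then gives $\beta\in A$, contradicting Step 1.

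The only point requiring care is the convention for ``$\rho$-closed''. It must mean downward closure under the predecessor map, as the name $\clrho$ suggests, and not closure under successors. Under the downward reading the whole argument is a few lines. If the intended meaning were different, I would instead choose $\beta$ above $\sup A$ and use regressiveness once more to keep $\Succ_\rho(\beta)$ away from $A$.
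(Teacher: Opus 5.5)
Your proof is correct and is essentially the paper's argument. The paper picks $\gamma<\beta$ outside $A\cup\rho[A]$ and uses regressiveness of $\rho$ to get $\gamma\notin\Succ_\rho(\beta)$. You pick them outside $A$ and invoke $\rho$-closure to get $\rho[A]\subseteq A$, which is the same thing, since for $\rho$-closed $A$ one has $A\cup\rho[A]=A$.
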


\begin{proof}
The set $A\cup\rho[A]$ is countable. Choose distinct ordinals $\gamma<\beta<\omegaone$ outside $A\cup\rho[A]$. Since $\beta\notin\rho[A]$, there is no $\eta\in A$ with $\rho(\eta)=\beta$, so $\Succ_\rho(\beta)\cap A=\varnothing$. Also every element of $\Succ_\rho(\beta)$ is strictly above $\beta$, because $\rho$ is regressive on $\omegaone\setminus\{0\}$. Since $\gamma<\beta$, it follows that $\gamma\notin\Succ_\rho(\beta)$.
\end{proof}

\begin{lemma}[Finite shielding]\label{lem:finite-shielding}
Let $q\in\mathbb P_1$, let $\beta<\omegaone$ and $i<\omega$, and define
\[
C(q,\beta,i)=\bigl\{n<\omega : q(\beta,i,n)\text{ is defined, or }q(\eta,i,n)\text{ is defined for some }\eta\in\Succ_\rho(\beta)\bigr\}.
\]
Then $C(q,\beta,i)$ is finite. If $s\subseteq\omega$ is finite or cofinite and $s\cap C(q,\beta,i)=\varnothing$, then
\[
\tau^{\mathrm{1cas}}_{\beta,i,s}(q)=q.
\]
\end{lemma}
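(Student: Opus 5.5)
The plan is to prove the two claims separately, both directly from the definitions; nothing beyond the finiteness of conditions in $\mathbb P_1=\operatorname{Fn}(\omegaone\times\omega\times\omega,2,{<}\omega)$ and the explicit description of $\tau^{\mathrm{1cas}}_{\beta,i,s}$ is needed.

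For finiteness, I will observe that $\operatorname{dom}(q)$ is a finite subset of $\omegaone\times\omega\times\omega$. The set $C(q,\beta,i)$ is contained in the projection onto the third coordinate,
\[
\{\,n<\omega : (\zeta,i,n)\in\operatorname{dom}(q)\text{ for some }\zeta\in\{\beta\}\cup\Succ_\rho(\beta)\,\},
\]
and this is the image of a subset of $\operatorname{dom}(q)$ under a map. Hence it is finite. The point to stress is that $\Succ_\rho(\beta)$ may well be infinite, but this does not matter: only the finitely many triples actually in $\operatorname{dom}(q)$ contribute.

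For invariance, I will first make explicit how $\tau=\tau^{\mathrm{1cas}}_{\beta,i,s}$ acts on conditions. It sends $q$ to the condition $\tau(q)$ with the same domain, defined by
\[
\tau(q)(\zeta,j,n)=1-q(\zeta,j,n)
\]
when $j=i$, $\zeta\in\{\beta\}\cup\Succ_\rho(\beta)$ and $n\in s$, and by $\tau(q)(\zeta,j,n)=q(\zeta,j,n)$ otherwise. This formula makes sense for a cofinite $s$ too, because it is applied only to the finitely many points of $\operatorname{dom}(q)$. The two conditions therefore agree everywhere except possibly at triples $(\zeta,i,n)\in\operatorname{dom}(q)$ with $\zeta\in\{\beta\}\cup\Succ_\rho(\beta)$ and $n\in s$. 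Any such $n$ lies in $C(q,\beta,i)$ by definition, so the hypothesis $s\cap C(q,\beta,i)=\varnothing$ rules out every such triple. Hence $\tau(q)=q$.

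I expect no real obstacle. The only point needing care is to state the action on conditions precisely: the generator flips the bits in row $(\beta,i)$ and in the rows $(\eta,i)$ for $\eta\in\Succ_\rho(\beta)$, at columns in $s$, and leaves the domain unchanged. Once that is fixed, both parts are immediate from the finiteness of $\operatorname{dom}(q)$.
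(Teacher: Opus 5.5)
Your proposal is correct and follows essentially the same argument as the paper. Finiteness of $C(q,\beta,i)$ comes from finiteness of $\operatorname{dom}(q)$, and invariance holds because the only coordinates $\tau^{\mathrm{1cas}}_{\beta,i,s}$ can flip lie on row $(\beta,i)$ or on rows $(\eta,i)$ with $\eta\in\Succ_\rho(\beta)$, at bit indices in $s$, and $s\cap C(q,\beta,i)=\varnothing$ excludes every such index in $\operatorname{dom}(q)$. Your explicit formula for the action on conditions just makes precise what the paper states in words.
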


\begin{proof}
Because $q$ is a finite condition, only finitely many triples $(\xi,j,n)$ lie in its domain, so the set $C(q,\beta,i)$ is finite. If $s\cap C(q,\beta,i)=\varnothing$, then the generator $\tau^{\mathrm{1cas}}_{\beta,i,s}$ does not toggle any domain entry of $q$: by definition, the only potentially affected coordinates are those on the row $(\beta,i)$ and on the rows $(\eta,i)$ with $\eta\in\Succ_\rho(\beta)$, and at each such coordinate the relevant bit index $n$ lies outside $s$. Hence every value of $q$ is preserved, so $\tau^{\mathrm{1cas}}_{\beta,i,s}(q)=q$.
\end{proof}

\begin{proposition}[Normality of the support filter]\label{prop:filter-normal}
The filter $\Fcas$ is a normal filter of subgroups of $\Gcas$.
\end{proposition}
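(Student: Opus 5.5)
The plan is to verify directly the three defining properties of a normal filter of subgroups: $\Fcas$ is a nonempty filter whose members are subgroups of $\Gcas$, it is closed under finite intersections and supergroups, and it is closed under conjugation by elements of $\Gcas$. Since $\Fcas$ is by definition the filter generated by the subgroups $\Fix^{\mathrm{1cas}}_{\rho}(A)$ with $A$ countable, upward closure is built in, and the whole argument reduces to statements about these generators.

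\textbf{Step 1 (each generator is a subgroup).} The set of automorphisms acting trivially on every row above $\clrho(A)$ contains the identity and is closed under composition and inverses. Each element of $\Gcas$ acts rowwise by toggling bits, so restricting to a fixed row gives a homomorphism to the group of bit-flip permutations of that row, and $\Fix^{\mathrm{1cas}}_{\rho}(A)$ is the intersection of the kernels over the rows in $\clrho(A)\times\omega$.

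\textbf{Step 2 (filter base).} For countable $A,B$ we have $\clrho(A\cup B)=\clrho(A)\cup\clrho(B)$, since the $\rho$-closure is the union of the closures of singletons. Hence
\[
\Fix^{\mathrm{1cas}}_{\rho}(A\cup B)=\Fix^{\mathrm{1cas}}_{\rho}(A)\cap\Fix^{\mathrm{1cas}}_{\rho}(B),
\]
and $A\cup B$ is still countable. So the generators form a base closed under finite intersections, and the generated filter consists exactly of the subgroups containing some $\Fix^{\mathrm{1cas}}_{\rho}(A)$. It is proper because it contains $\Gcas$ (take $A=\varnothing$) and every member contains the identity.

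\textbf{Step 3 (normality), the main point.} Let $\pi\in\Gcas$ and $H\supseteq\Fix^{\mathrm{1cas}}_{\rho}(A)$. It suffices to show $\pi\,\Fix^{\mathrm{1cas}}_{\rho}(A)\,\pi^{-1}\in\Fcas$. The key observation is that every element of $\Gcas$ permutes conditions by toggling bits in place; it never moves coordinates between rows, and all these toggles commute. So $\Gcas$ is abelian, being generated by commuting involutions (indeed an elementary abelian $2$-group). Then
\[
\pi\,\Fix^{\mathrm{1cas}}_{\rho}(A)\,\pi^{-1}=\Fix^{\mathrm{1cas}}_{\rho}(A)\in\Fcas,
\]
and hence $\pi H\pi^{-1}\supseteq\pi\,\Fix^{\mathrm{1cas}}_{\rho}(A)\,\pi^{-1}$ lies in $\Fcas$.

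The main obstacle I expect is justifying commutativity rigorously, in particular checking that toggles along cofinite sets behave as automorphisms of $\mathbb P_1$ that commute with one another. The cleanest way is to represent each element of $\Gcas$ by its toggle pattern, a subset of $\omegaone\times\omega\times\omega$, acting on a condition $q$ by flipping $q(\xi,i,n)$ at every coordinate of the pattern lying in $\operatorname{dom}(q)$. Under this representation, composition corresponds to symmetric difference of patterns, which is commutative.

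If a reader prefers to avoid the abelian shortcut, there is a fallback. Conjugation by $\pi$ preserves the property of acting trivially on a given row, because $\pi$ maps each row to itself. This gives the same identity $\pi\,\Fix^{\mathrm{1cas}}_{\rho}(A)\,\pi^{-1}=\Fix^{\mathrm{1cas}}_{\rho}(A)$ directly.
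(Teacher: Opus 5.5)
Your proof is correct and follows the paper's: the key point in both is that $\Gcas$ is abelian (the paper argues via XOR on individual bit values, you via composition as symmetric difference of toggle patterns), so conjugation fixes each $\Fix^{\mathrm{1cas}}_{\rho}(A)$ and the generated filter is conjugation-closed. Your Steps 1--2, which check the filter axioms via $\clrho(A\cup B)=\clrho(A)\cup\clrho(B)$, add detail the paper leaves implicit.
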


\begin{proof}
Each basic generator $\tau^{\mathrm{1cas}}_{\xi,i,s}$ acts on each bit value $q(\zeta,j,n)$ by XOR with $0$ or $1$; since $\Ftwo$-addition is commutative and associative, any two generators commute and $\Gcas$ is abelian. In particular, every subgroup of $\Gcas$ is normal. Since each subgroup $\Fix^{\mathrm{1cas}}_{\rho}(A)$ is a subgroup of $\Gcas$, every conjugate of such a subgroup is itself, and the filter generated by these subgroups is closed under conjugation. Thus $\Fcas$ is a normal filter of subgroups.
\end{proof}

\begin{proposition}[$\omega_1$-completeness of the support filter]\label{prop:filter-omega1}
The filter $\Fcas$ is $\omega_1$-complete. More precisely, if $\{A_n:n<\omega\}$ is a countable family of countable $\rho$-closed subsets of $\omegaone$, then for
\[
A=\bigcup_{n<\omega} A_n
\]
one has that $A$ is countable and $\rho$-closed, and
\[
\Fix^{\mathrm{1cas}}_{\rho}(A)\subseteq \bigcap_{n<\omega}\Fix^{\mathrm{1cas}}_{\rho}(A_n).
\]
\end{proposition}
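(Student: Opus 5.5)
The plan is to check the three assertions in turn: $A$ is countable, $A$ is $\rho$-closed, and the displayed inclusion holds. Then I will deduce $\omega_1$-completeness of $\Fcas$ from them.

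Countability of $A$ uses only that a countable union of countable sets is countable. The ground model $V[G_0]$ satisfies ZFC, so this is immediate. For $\rho$-closure, take $\eta\in A$ with $\eta\neq 0$. Then $\eta\in A_n$ for some $n$, and since $A_n$ is $\rho$-closed we get $\rho(\eta)\in A_n\subseteq A$. Hence $A$ is closed under $\rho$, so $\clrho(A)=A$. In the same way $\clrho(A_n)=A_n$ for each $n$.

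For the inclusion, let $g\in\Fix^{\mathrm{1cas}}_{\rho}(A)$. By definition, $g$ acts trivially on every row $(\zeta,j)$ with $\zeta\in\clrho(A)=A$. Since $A_n\subseteq A$, and hence $\clrho(A_n)\subseteq\clrho(A)$, the element $g$ acts trivially on every row above $\clrho(A_n)$. So $g\in\Fix^{\mathrm{1cas}}_{\rho}(A_n)$ for every $n$. This monotonicity of $B\mapsto\Fix^{\mathrm{1cas}}_{\rho}(B)$ under inclusion is the only property of the fixing groups the argument needs.

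To obtain $\omega_1$-completeness, let $\{H_n:n<\omega\}\subseteq\Fcas$. The filter is generated by the groups $\Fix^{\mathrm{1cas}}_{\rho}(B)$ with $B$ countable, so each $H_n$ contains a finite intersection of such groups. By monotonicity, that finite intersection contains $\Fix^{\mathrm{1cas}}_{\rho}(B_n)$, where $B_n$ is the (countable) union of the finitely many sets involved.

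Now set $A_n=\clrho(B_n)$. This set is countable, because the $\rho$-closure of a countable set is the union of its finite $\rho$-chains, and each point has only countably many iterates under $\rho$. It also has the same fixing group as $B_n$, since the fixing group is defined through the closure. Applying the displayed inclusion then gives
\[
\Fix^{\mathrm{1cas}}_{\rho}(A)\subseteq\bigcap_{n<\omega}H_n,
\]
so the intersection lies in $\Fcas$.

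The choices made here (a finite subfamily of generators for each $n$, and each $B_n$) use countable choice. This is harmless because we are working in $V[G_0]$, a ZFC model. I expect no genuine obstacle; the only point requiring care is to use $\clrho$ consistently, so that the inclusion $\clrho(A_n)\subseteq\clrho(A)$ is justified.
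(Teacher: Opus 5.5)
Your proof is correct and follows the paper's argument: $A$ is countable as a countable union, it inherits $\rho$-closure from each $A_n$, and the inclusion follows from monotonicity of $B\mapsto\Fix^{\mathrm{1cas}}_{\rho}(B)$. Your extra step, which reduces arbitrary members $H_n\in\Fcas$ to single generators via finite intersections, unions $B_n$, and closures, spells out the passage to $\omega_1$-completeness that the paper leaves implicit in its last sentence.
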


\begin{proof}
The union $A$ is countable. It is also $\rho$-closed: if $\eta\in A$, then $\eta\in A_n$ for some $n<\omega$, and because $A_n$ is $\rho$-closed, every predecessor obtained by iterating $\rho$ from $\eta$ still lies in $A_n\subseteq A$. If an automorphism acts trivially on every row above $A$, then it acts trivially on every row above each $A_n$, so
\[
\Fix^{\mathrm{1cas}}_{\rho}(A)\subseteq \Fix^{\mathrm{1cas}}_{\rho}(A_n)
\qquad\text{for all }n<\omega.
\]
Thus $\Fix^{\mathrm{1cas}}_{\rho}(A)$ is contained in the intersection, and the generated filter is $\omega_1$-complete.
\end{proof}

\begin{theorem}[Base theorem]\label{thm:base}
Let $H_1\subseteq\mathbb P_1$ be $V[G_0]$-generic, and let
\[
\Ncas=\HS^{H_1}_{\Scas}
\]
be the associated symmetric extension. Then $\Ncas\models \ZF+\DC+\neg\BPI$.
\end{theorem}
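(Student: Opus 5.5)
The plan is to split \Cref{thm:base} into its three conclusions and handle each with the standard symmetric-extension toolkit, using the preliminary results on the support filter.

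\emph{ZF.} By \Cref{prop:filter-normal}, $\Fcas$ is a normal filter of subgroups of $\Gcas$. The general theorem on symmetric extensions then gives that $\HS^{H_1}_{\Scas}$ is a transitive model of $\ZF$ containing $V[G_0]$. Here the symmetric system is defined in the ground model $V[G_0]$, and we force over $V[G_0]$ with $\mathbb P_1$. I will cite this standard fact (Jech, or the Karagila--Schilhan framework) rather than reprove it.

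\emph{DC.} I will use the standard criterion: if $\mathbb P_1$ is $\kappa$-c.c.\ (here ccc by \Cref{rem:ccc}), $\Fcas$ is $\kappa^+$-complete ($\omega_1$-complete by \Cref{prop:filter-omega1}), and the ground model $V[G_0]$ satisfies $\AC$, then the symmetric extension satisfies $\DC_\kappa$, in particular $\DC$. Sketch: given a symmetric name $\dot R$ for a relation with no terminal nodes on a symmetric set, work in $V[G_0]$ and build, by $\AC$ there, a countable sequence of names $\dot x_n$. Each $\dot x_n$ is symmetric with countable support, and each is forced to be an $\dot R$-successor of $\dot x_{n-1}$ via a maximal antichain argument and mixing. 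The union of the countably many supports is countable and $\rho$-closed by \Cref{prop:filter-omega1}, so the name $\langle \dot x_n : n<\omega\rangle^\bullet$ is symmetric. The one technical point is that mixing over an antichain preserves symmetry. This holds because ccc makes the antichain countable, and $\omega_1$-completeness then covers the countably many supports involved.

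\emph{$\neg\BPI$.} This is the main obstacle. I will show that some Boolean algebra in $\Ncas$ has no ultrafilter. The plan uses the rows $c_{\xi,i}$ viewed as elements of $\Pow(\omega)/\Fin$, or more precisely the family $\{\{c_{\xi,i}, \omega\setminus c_{\xi,i}\}\}$. These pairs are permuted by the complement-flip generators with cofinite $s$. Concretely, set $P=\{\{c_{\xi,i},\bar c_{\xi,i}\}:\xi<\omegaone,i<\omega\}$. Each pair and $P$ itself are symmetric, with supports $\{\xi\}$ closed and $\varnothing$ respectively.

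I will argue that an ultrafilter on a suitable algebra (for instance the algebra generated by the $c_{\xi,i}$ as subsets of $\omega$, or the standard $\BPI\Rightarrow$ choice on families of pairs which are "linearly independent" reduction) would yield a selector picking one element from each pair in a definable uncountable subfamily. Suppose $\dot f$ names such a selector, with $q$ forcing this and countable $\rho$-closed support $A$. Apply \Cref{lem:fresh-separation} to obtain $\beta\notin A$ with $\Succ_\rho(\beta)\cap A=\varnothing$. Then take $s=\omega\setminus C(q,\beta,i)$, which is cofinite. By \Cref{lem:finite-shielding}, $\tau=\tau^{\mathrm{1cas}}_{\beta,i,s}$ fixes $q$. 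Since $\tau$ touches only rows above $\{\beta\}\cup\Succ_\rho(\beta)$, which are disjoint from $A$, we have $\tau\in\Fix^{\mathrm{1cas}}_\rho(A)$ and so $\tau\dot f=\dot f$.

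On the other hand, $\tau$ replaces $c_{\beta,i}$ by a finite modification of its complement. Extending $q$ to decide $\dot f(\{c_{\beta,i},\bar c_{\beta,i}\})$, and arranging that this decision lies only in the bits of $C$ or is otherwise preserved, I get $q'$ and $\tau q'$ that are compatible yet force contradictory choices. The delicate point is the bookkeeping. If the extension $q'$ adds new bits in $s$, the set $s$ must be chosen after $q'$. So I will first pick $q'\le q$ deciding the chosen element, and only then define $s$ from $C(q',\beta,i)$. The role of the condition $\gamma\notin\Succ_\rho(\beta)$ is to let a second fresh row $\gamma$ serve as an untouched witness when the pair is coded via two rows, in case the argument has to be run with pairs $\{c_{\beta,i}\triangle c_{\gamma,i},\ \cdot\}$.

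Finally, I will reduce $\BPI$ to the existence of such a selector. The route is the standard fact that $\BPI$ implies every family of pairs carrying compatible "orientations" in an appropriate Boolean algebra (for example, the free algebra generated by $P$ modulo identifying complements) has a choice function. I expect this step, together with the exact compatibility bookkeeping, to require the most care.
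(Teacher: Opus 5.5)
Your $\ZF$ and $\DC$ parts match the paper: normality of $\Fcas$ plus the standard symmetric-extension theorem, and ccc plus $\omega_1$-completeness plus the standard $\DC$-preservation theorem. The gap is in $\neg\BPI$, and it is not a bookkeeping issue.

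\emph{Literal complementary pairs cannot work.} A family of pairs $\{c_{\xi,i},\omega\setminus c_{\xi,i}\}$ of subsets of $\omega$ cannot witness any failure of choice. In $\ZF$, ``take the member containing $0$'' is a definable selector on every family of pairs $\{x,\omega\setminus x\}$.

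\emph{Your symmetry claims fail.} Take a countable $\rho$-closed $A$ and $\beta$ as in \Cref{lem:fresh-separation}. The finite toggle $\tau^{\mathrm{1cas}}_{\beta,i,\{n\}}$ lies in $\Fix^{\mathrm{1cas}}_{\rho}(A)$. It sends $\{c_{\beta,i},\omega\setminus c_{\beta,i}\}$ to the pair determined by $c_{\beta,i}\triangle\{n\}$, which is not in $P$. So $P$ has no support at all, in particular not $\varnothing$. Also, $\{\xi\}$ is not $\rho$-closed.

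\emph{The same defect breaks your shielding step.} Here $s=\omega\setminus C(q',\beta,i)$ is cofinite but in general $\neq\omega$. So $\tau$ does not fix $Q$; it moves it to a different pair $\tau Q$. From $q'\forces\dot f(Q)=c_{\beta,i}$ you only get $q'\forces\dot f(\tau Q)=c_{\beta,i}\triangle s$, which is no contradiction. Choosing $s$ after $q'$ is correct, but it cannot repair this.

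The missing idea is to work in $\Pow(\omega)/\Fin$, as your first sentence hinted before you switched to literal sets. Modulo finite, the finite-$s$ generators act trivially and the cofinite-$s$ generators act exactly as Boolean complementation. Consequently the pair $\{[c_{\beta,i}]_{\Fin},\neg[c_{\beta,i}]_{\Fin}\}$ is fixed by all of $\Gcas$, while $\tau^{\mathrm{1cas}}_{\beta,i,s}$ swaps its two members. The paper's pair $\{X^i_{\beta,\gamma},\neg X^i_{\beta,\gamma}\}$, built from the agreement set $E^i_{\beta,\gamma}$, behaves the same way.

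Let $\mathbb T$ be the subalgebra of $\Pow(\omega)/\Fin$ generated by these classes. Its natural name is fixed by $\Gcas$, so $\mathbb T\in\Ncas$. Any ultrafilter $\dot U$ on $\mathbb T$ contains exactly one of $X,\neg X$, so no separate ``orientation'' reduction is needed. The argument then runs as you planned:
\begin{enumerate}[label=\textup{(\arabic*)},leftmargin=2.4em]
    \item choose $q\le p$ deciding $X\in\dot U$;
    \item choose $s$ cofinite and disjoint from $C(q,\beta,i)$;
    \item then $\tau q=q$, $\tau\dot U=\dot U$ and $\tau X=\neg X$;
    \item hence $q$ forces both $X$ and $\neg X$ into $\dot U$, a contradiction.
\end{enumerate}
This is the paper's proof. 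The paper uses the two-row classes $X^i_{\beta,\gamma}$, so it needs $\gamma\notin\Succ_\rho(\beta)$ to keep row $(\gamma,i)$ untouched; a single-row version needs only $\beta$.

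Your selector route would also succeed once the family is taken mod finite. $\BPI$ implies the ordering principle, which yields choice for families of finite sets, so $\neg\Ctwo$ gives $\neg\BPI$.
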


\begin{proof}
By the standard symmetric extension theorem, $\Ncas$ is a transitive model of $\ZF$ between $V[G_0]$ and $V[G_0][H_1]$; see \cite[Chapter 15]{Jech2003} and the streamlined formulation in \cite{KaragilaSchilhan2026Theory}. The substantive point is the failure of $\BPI$.

Fix a row number $i<\omega$. In $V[G_0][H_1]$, let $\mathbb T^i_{\rho}$ be the Boolean subalgebra of $\Pow(\omega)/\Fin$ generated by the classes
\[
X^i_{\beta,\gamma}=[E^i_{\beta,\gamma}]_{\Fin},
\qquad
E^i_{\beta,\gamma}=\{n<\omega:c_{\beta,i}(n)=c_{\gamma,i}(n)\},
\]
for $\beta\neq\gamma$. We claim that no hereditarily symmetric ultrafilter on $\mathbb T^i_{\rho}$ exists.

Suppose toward a contradiction that some condition $p\in\mathbb P_1$ forces that $\dot U$ is an ultrafilter on $\mathbb T^i_{\rho}$, and let $A\subseteq\omegaone$ be a countable $\rho$-closed support for $\dot U$. By \Cref{lem:fresh-separation}, choose $\beta,\gamma\notin A$ such that $\beta\neq\gamma$, $\gamma\notin\Succ_\rho(\beta)$, and no immediate successor of $\beta$ lies in $A$. Let
\[
X=X^i_{\beta,\gamma}\in\mathbb T^i_{\rho}.
\]
Since $p$ forces that $\dot U$ is an ultrafilter on the whole algebra, there is $q\le p$ deciding whether $X\in\dot U$.

Let $C=C(q,\beta,i)\subseteq\omega$ be as in \Cref{lem:finite-shielding}. Choose a cofinite ground-model set $s\subseteq\omega\setminus C$, and let
\[
\tau=\tau^{\mathrm{1cas}}_{\beta,i,s}.
\]
By \Cref{lem:finite-shielding}, $\tau(q)=q$. By the choice of $\beta$, the automorphism $\tau$ acts trivially on every row over $A$, so $\tau\in\Fix^{\mathrm{1cas}}_{\rho}(A)$ and therefore $\tau(\dot U)=\dot U$.

Now $\tau$ toggles the row $(\beta,i)$ on the cofinite set $s$ and leaves the row $(\gamma,i)$ untouched, because $\gamma\neq\beta$ and $\gamma\notin\Succ_\rho(\beta)$. Hence away from the finite complement $\omega\setminus s$, equality between $c_{\beta,i}$ and $c_{\gamma,i}$ is turned into inequality. Thus
\[
\tau(X)=\neg X
\qquad\text{in }\Pow(\omega)/\Fin.
\]

If $q\forces X\in\dot U$, then applying $\tau$ yields
\[
q=\tau(q)\forces \tau(X)\in\tau(\dot U),
\]
so $q\forces \neg X\in\dot U$ as well, contradicting ultrafilterhood. If instead $q\forces X\notin\dot U$, then because $q$ also forces that $\dot U$ is an ultrafilter, it forces $\neg X\in\dot U$; applying $\tau$ again gives $q\forces X\in\dot U$, the same contradiction. Therefore no supported ultrafilter name on $\mathbb T^i_{\rho}$ exists. So $\Ncas\models\neg\BPI$.

By \Cref{prop:filter-normal,prop:filter-omega1}, the filter $\Fcas$ is normal and $\omega_1$-complete. Since $\mathbb P_1$ is ccc, the standard preservation theorem for dependent choice in symmetric extensions applies; see \cite{Pincus1977,Karagila2019PreservingDC,Karagila2019Iterating}. Hence $\Ncas\models \ZF+\DC+\neg\BPI$.
\end{proof}

\begin{proposition}[Uncountable support in the one-step cascade geometry]\label{prop:nonconj}
The group $\Gcas$ contains elements with stationary, hence uncountable, row-support, and therefore elements with uncountable total support. Consequently, via any coordinate relabeling of the forcing, the cascade system is not conjugate to any symmetric system on $\mathbb P_1$ whose automorphism group consists entirely of elements with countable total support.
\end{proposition}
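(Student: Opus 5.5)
The plan is to exhibit a single generator with large support; no products of generators are needed. Fix any $\xi<\omegaone$ and $i<\omega$, and take $\tau=\tau^{\mathrm{1cas}}_{\xi,i,\{0\}}$. By definition this automorphism toggles bit $0$ of the row $(\xi,i)$ and of every row $(\eta,i)$ with $\eta\in\Succ_\rho(\xi)$. So its row-support, meaning the set of nodes whose rows it moves, is $\{\xi\}\cup\Succ_\rho(\xi)$. Its total support, meaning the set of coordinates $(\zeta,j,n)$ it actually moves, contains $\{(\eta,i,0):\eta\in\Succ_\rho(\xi)\}$. Everything therefore reduces to showing that in $V[G_0]$ each fiber $\Succ_\rho(\xi)$ is stationary in $\omegaone$.

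That fiber claim is the main step, and I would prove it by a genericity argument over $\mathbb P_0$. Since $\mathbb P_0$ is ccc (\Cref{rem:ccc}), $\omegaone$ is preserved. Moreover, every club subset of $\omegaone$ in $V[G_0]$ contains a club from $V$; this is the standard fact that a club name can be covered by a ground-model club via countable antichains and closure. Hence it suffices to show that $\Succ_\rho(\xi)$ meets every ground-model club $C$.

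For that, fix $p\in\mathbb P_0$ and a club $C\in V$. Choose $\eta\in C$ with $\eta>\xi$ and $\eta\notin\operatorname{dom}(p)$; this is possible because $\operatorname{dom}(p)$ is finite and $C$ is unbounded. Then $p\cup\{(\eta,\xi)\}$ is still a finite partial regressive function extending $p$, and it forces $\eta\in\Succ_\rho(\xi)\cap C$. By density, $\Succ_\rho(\xi)\cap C\neq\varnothing$ in $V[G_0]$. So the fiber is stationary, in particular uncountable, and $\tau$ has stationary row-support and uncountable total support. The only delicate point is the club-covering step, which is standard for ccc forcing.

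For the non-conjugacy consequence, let $\pi$ be a coordinate relabeling, i.e.\ a bijection of $\omegaone\times\omega\times\omega$ inducing an automorphism of $\mathbb P_1$. Conjugating an automorphism $\sigma$ by $\pi$ gives $\pi\sigma\pi^{-1}$, whose set of moved coordinates is exactly the $\pi$-image of the set moved by $\sigma$. This has the same cardinality. Hence $\pi\tau\pi^{-1}$ has uncountable total support, so it cannot belong to a group all of whose elements have countable total support. Thus no such relabeling conjugates $\Gcas$ into such a group.
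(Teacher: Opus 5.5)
Your proof is correct, but it obtains the stationary fiber by a different route than the paper.

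The paper applies Fodor's lemma in $V[G_0]$ to the regressive map $\rho$ on the club $\omegaone\setminus\{0\}$. The only forcing fact it uses is that $\mathbb P_0$ is ccc, so $\omegaone$ is preserved. This yields \emph{some} node $\xi$ whose fiber $\Succ_\rho(\xi)$ is stationary, and the paper takes a generator at that node.

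You instead exploit the genericity of $\rho$. Your density argument, extending $p$ by $(\eta,\xi)$ for a fresh $\eta\in C$ above $\xi$, shows that every ground-model club meets $\Succ_\rho(\xi)$. The ccc club-covering fact then transfers this to all clubs of $V[G_0]$, so \emph{every} fiber is stationary.

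The paper's route is shorter and needs nothing about $\rho$ beyond regressivity, so it would work for any regressive map, generic or not. Yours additionally relies on the club-covering lemma, but it gives a stronger conclusion: every generator $\tau^{\mathrm{1cas}}_{\xi,i,s}$ with $s\neq\varnothing$ has stationary row-support, not just those sitting at a Fodor node.

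The relabeling and non-conjugacy step is the same in both arguments: conjugation by a coordinate bijection preserves the cardinality of the set of moved coordinates.
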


\begin{proof}
Since $\rho$ is a regressive function on $\omegaone\setminus\{0\}$, which is a club subset of $\omegaone$, Fodor's lemma yields $\xi<\omegaone$ such that the fiber $\Succ_\rho(\xi)$ is stationary. Here we use that $\mathbb P_0$ is ccc, hence preserves $\omega_1$.

Fix $i<\omega$ and a nonempty finite $s\subseteq\omega$. Then the generator $\tau^{\mathrm{1cas}}_{\xi,i,s}$ acts on the row $(\xi,i)$ and on every row $(\eta,i)$ with $\eta\in\Succ_\rho(\xi)$. Its row-support is therefore
\[
\{\xi\}\cup \Succ_\rho(\xi),
\]
which is stationary and in particular uncountable.

A coordinate relabeling means a bijection of the index set $\omegaone\times\omega\times\omega$, and such a relabeling preserves the cardinality of the total support of every group element, namely the set of triples $(\eta,j,n)$ on which it acts nontrivially. Therefore any symmetric system on $\mathbb P_1$ whose automorphism group consists entirely of elements with countable total support remains of that form after coordinate relabeling. The cascade generator above has uncountable total support. Hence no coordinate relabeling can conjugate the cascade system to any symmetric system with this countable-total-support property.
\end{proof}

\section{Finite $\rho$-closed windows and the star-span lemma}\label{sec:starspan}

The usable local geometry of the one-step cascade model is not arbitrary rowwise independence, but completeness on finite $\rho$-closed windows. This is where the predecessor map $\rho$ becomes structurally productive.

\begin{definition}[$\rho$-closed windows]
A set $K\subseteq\omegaone$ is \emph{$\rho$-closed} if $\clrho(K)=K$. A finite $\rho$-closed set will be called a \emph{finite $\rho$-window}.
\end{definition}

\begin{definition}[Finite one-step stars]
For a finite $\rho$-window $K\subseteq\omegaone$ and $\xi\in K$, define
\[
\Star_K(\xi)=\{\xi\}\cup (\Succ_{\rho}(\xi)\cap K).
\]
The associated star vector is the characteristic vector $\mathbf 1_{\Star_K(\xi)}\in \Ftwo^K$.
\end{definition}

\begin{lemma}[Finite star-span lemma]\label{lem:starspan}
Let $K\subseteq\omegaone$ be a finite $\rho$-window. Then the family
\[
\{\mathbf 1_{\Star_K(\xi)}:\xi\in K\}
\]
spans $\Ftwo^K$. Equivalently, every finite bit-difference pattern on the rows over $K$ is realized by a finite composition of one-step cascade generators supported inside $K$.
\end{lemma}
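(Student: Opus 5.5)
We will show that the $|K|\times|K|$ matrix whose rows are the star vectors $\mathbf 1_{\Star_K(\xi)}$, $\xi\in K$, is unitriangular with respect to the ordinal order on $K$. Hence it is invertible over $\Ftwo$, and the star vectors form a basis of $\Ftwo^K$, so in particular they span it. The key point is that $\rho$ is regressive. Every $\eta\in\Succ_\rho(\xi)$ satisfies $\rho(\eta)=\xi<\eta$, so $\Star_K(\xi)\subseteq\{\zeta\in K:\zeta\ge\xi\}$, and its least element is $\xi$ itself. Listing $K=\{\xi_0<\xi_1<\dots<\xi_{m-1}\}$, the vector $\mathbf 1_{\Star_K(\xi_j)}$ therefore has a $1$ in coordinate $\xi_j$ and zeros in all coordinates $\xi_k$ with $k<j$. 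This gives a triangular matrix with ones on the diagonal.

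To make this constructive, and to make the ``equivalently'' clause transparent, we will run a downward peeling induction. Given a target pattern $v\in\Ftwo^K$, we let $\xi$ be the least element of $\operatorname{supp}(v)$, add $\mathbf 1_{\Star_K(\xi)}$ to $v$, and observe that the new support has least element strictly larger than $\xi$. Indeed, coordinate $\xi$ is cleared, and only coordinates above $\xi$ can be changed. Since $K$ is finite, this terminates after at most $|K|$ steps and expresses $v$ as a sum of star vectors. Note that $\rho$-closedness of $K$ is not really needed for spanning. It is used only so that each star is the restriction to $K$ of the actual cascade support $\{\xi\}\cup\Succ_\rho(\xi)$ and that the window is the natural domain.

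For the reformulation in terms of automorphisms, fix a row index $i$ and a bit $n$. The generator $\tau^{\mathrm{1cas}}_{\xi,i,\{n\}}$ with $\xi\in K$ flips bit $n$ on row $(\xi,i)$ and on rows $(\eta,i)$ for $\eta\in\Succ_\rho(\xi)$. Its effect on the rows over $K$ at bit $n$ is exactly the star vector $\mathbf 1_{\Star_K(\xi)}$. Since $\Gcas$ is abelian and each generator acts by XOR (\Cref{prop:filter-normal}), composition corresponds to addition in $\Ftwo^K$. A bit-difference pattern prescribed at finitely many triples $(\zeta,i,n)$ with $\zeta\in K$ then decomposes into finitely many pairs $(i,n)$. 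For each pair we apply the spanning result and compose the resulting generators.

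The main subtlety lies in the phrase ``supported inside $K$''. A generator $\tau_{\xi,i,\{n\}}$ also toggles successors of $\xi$ outside $K$, possibly uncountably many by \Cref{prop:nonconj}. So the honest statement is that the generators are indexed by nodes in $K$ and realize the prescribed pattern on the rows over $K$; their side effects outside $K$ are not controlled. We will state the lemma in that form, with the effect restricted to the rows over $K$, rather than claim that the composition is literally trivial off $K$.
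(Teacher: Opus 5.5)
Your proof is correct and is essentially the paper's argument. The paper also shows that the star-incidence matrix is unitriangular over $\Ftwo$, only it orders $K$ so that children precede parents in the finite predecessor forest and writes the star vectors as columns; you use the ordinal order and the regressiveness of $\rho$ directly and write them as rows, which gives the same triangularity up to reversal and transposition. Your further remarks are accurate clarifications of the paper's phrase ``supported inside $K$'': that $\rho$-closedness is not needed for spanning, and that the generators realize the pattern only on the rows over $K$, with uncontrolled effects on successors outside $K$.
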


\begin{proof}
Order $K$ by decreasing height in the finite predecessor forest determined by $\rho$, so that every child appears before its parent. Form the matrix whose $\xi$-th column is the star vector $\mathbf 1_{\Star_K(\xi)}\in\Ftwo^K$ written in this order.

Each column has a $1$ in the row indexed by $\xi$, because $\xi\in\Star_K(\xi)$. If $\eta\in\Succ_{\rho}(\xi)\cap K$, then $\eta$ appears earlier than $\xi$ in the chosen order, so all other $1$'s in the $\xi$-th column occur strictly above the diagonal entry. Thus the star-incidence matrix is upper triangular with diagonal entries equal to $1$. Over $\Ftwo$ such a matrix is invertible, so the star vectors are linearly independent. Since there are $|K|$ of them in the $|K|$-dimensional space $\Ftwo^K$, they form a basis and therefore span all finite bit-difference patterns on $K$.
\end{proof}

\begin{remark}
\Cref{lem:starspan} is the exact local replacement for rowwise-independent toggles, and it is the combinatorial input reused throughout the rank-$1$ analysis.
\end{remark}

\section{\texorpdfstring{Rank-$1$}{Rank-1} packet calculus and normalization}\label{sec:rank1}

We now convert the finite-window linear algebra into a language for rank-$1$ hereditarily symmetric names. By a \emph{rank-$1$ hereditarily symmetric real} we mean a hereditarily symmetric $\mathbb P_1$-name for a subset of $\omega$---that is, a name $\dot x$ all of whose members are pairs $(\check m,r)$ with $m\in\omega$ and $r\in\mathbb P_1$; equivalently, an element of $\Ncas$ that is a subset of $\omega$.

\begin{definition}[Exact finite cascade packet]
An \emph{exact finite cascade packet} is a finite condition
\[
r\in\mathbb P_1
\]
whose node-support
\[
\suppnode(r)=\{\xi<\omegaone:\exists i,n\; r(\xi,i,n)\text{ is defined}\}
\]
is finite and $\rho$-closed.
\end{definition}

\begin{definition}[Rank-$1$ packet scheme]
A \emph{rank-$1$ packet scheme} is a pair
\[
\mathfrak S=(A,\langle \mathcal C_m:m<\omega\rangle)
\]
such that:
\begin{enumerate}[label=\textup{(\arabic*)},leftmargin=2.4em]
    \item $A\subseteq\omegaone$ is countable and $\rho$-closed;
    \item for each $m<\omega$, $\mathcal C_m$ is a countable family of exact finite cascade packets $r$ with $\suppnode(r)\subseteq A$.
\end{enumerate}
The associated name is
\[
\dot x_{\mathfrak S}=\{(\check m,r):m<\omega,\ r\in \mathcal C_m\}.
\]
\end{definition}

\begin{proposition}\label{prop:packet-hs}
Every packet name $\dot x_{\mathfrak S}$ arising from a rank-$1$ packet scheme $\mathfrak S=(A,\langle \mathcal C_m:m<\omega\rangle)$ is hereditarily symmetric and is supported by $A$.
\end{proposition}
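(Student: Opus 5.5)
The plan is to show directly that $\Fix^{\mathrm{1cas}}_{\rho}(A)$ fixes $\dot x_{\mathfrak S}$ outright. Since $A$ is countable and $\rho$-closed, $\clrho(A)=A$ and $\Fix^{\mathrm{1cas}}_{\rho}(A)\in\Fcas$, so this gives the "supported by $A$" clause. Hereditary symmetry then comes almost for free. The members of $\dot x_{\mathfrak S}$ are pairs $(\check m,r)$, and canonical names $\check m$ are fixed by every automorphism, hence are hereditarily symmetric. So once $\dot x_{\mathfrak S}$ itself is symmetric, it is hereditarily symmetric.

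For the main step, take $\pi\in\Fix^{\mathrm{1cas}}_{\rho}(A)$. By definition $\pi$ acts trivially on every row $(\xi,i)$ with $\xi\in A$. Every generator, and hence every element of $\Gcas$, acts on a condition coordinatewise: it XORs the bit at $(\zeta,j,n)$ with a value depending only on that coordinate, and leaves the domain unchanged. So for $r\in\mathcal C_m$ the condition $\pi(r)$ has the same domain as $r$. All its coordinates lie on rows $(\xi,i)$ with $\xi\in\suppnode(r)\subseteq A$, where $\pi$ is the identity, so $\pi(r)=r$.

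Then I compute
\[
\pi(\dot x_{\mathfrak S})=\{(\pi\check m,\pi r):m<\omega,\ r\in\mathcal C_m\}=\{(\check m,r):m<\omega,\ r\in\mathcal C_m\}=\dot x_{\mathfrak S}.
\]
This shows $\Fix^{\mathrm{1cas}}_{\rho}(A)\subseteq\operatorname{sym}(\dot x_{\mathfrak S})$, so the stabilizer lies in $\Fcas$ because filters are upward closed.

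The only point needing care is the meaning of "acts trivially on every row above $A$". I will read it as: the group element, viewed as a function $\omegaone\times\omega\times\omega\to\Ftwo$ of toggle bits, vanishes on $A\times\omega\times\omega$. Under this reading $\pi(r)=r$ is immediate from the coordinatewise action. Countability of $\mathcal C_m$ and exactness ($\rho$-closedness of $\suppnode(r)$) are not needed for symmetry. They belong to the scheme's role in the later normalization, not to this proposition.
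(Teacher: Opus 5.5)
Your proposal is correct and is essentially the paper's argument: every packet in each $\mathcal C_m$ has node-support inside $A$, so every $\pi\in\Fix^{\mathrm{1cas}}_{\rho}(A)$ fixes each pair $(\check m,r)$. Hence $\Fix^{\mathrm{1cas}}_{\rho}(A)\in\Fcas$ fixes $\dot x_{\mathfrak S}$ pointwise, and the name lies in $\HS$ with support $A$; your extra remarks (the coordinatewise XOR description of $\Gcas$, hereditary symmetry of the check names, and that exactness and countability of $\mathcal C_m$ play no role here) only make explicit what the paper leaves implicit.
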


\begin{proof}
Every packet $r\in\mathcal C_m$ has node-support contained in $A$. Hence every automorphism in $\Fix^{\mathrm{1cas}}_{\rho}(A)$ fixes each packet appearing in $\dot x_{\mathfrak S}$, and therefore fixes each pair $(\check m,r)$ in the name. Thus $\Fix^{\mathrm{1cas}}_{\rho}(A)$ fixes $\dot x_{\mathfrak S}$ pointwise, so $\dot x_{\mathfrak S}\in\HS$ and is supported by $A$.
\end{proof}

\begin{lemma}[Decision invariance on $\rho$-closed support]\label{lem:decision}
Let $\dot x\in\HS$ be supported by a countable $\rho$-closed set $A$. If $p$ decides whether $\check m\in\dot x$, then the restriction of $p$ to the rows over $A$ decides the same truth value.
\end{lemma}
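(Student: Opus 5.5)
Write $p_A=p\restriction(A\times\omega\times\omega)$ for the restriction of $p$ to the rows over $A$. Suppose $p\forces\check m\in\dot x$; the case $p\forces\check m\notin\dot x$ is symmetric. The plan is a density argument. If $p_A$ fails to force $\check m\in\dot x$, there is $q\le p_A$ with $q\forces\check m\notin\dot x$. We will produce an automorphism $\pi\in\Fix^{\mathrm{1cas}}_{\rho}(A)$ with $\pi(q)$ compatible with $p$. Since $\pi$ fixes $\dot x$ and $\check m$, we get $\pi(q)\forces\check m\notin\dot x$, and a common extension of $\pi(q)$ and $p$ gives a contradiction.

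\textbf{Construction of $\pi$.} First, $q$ and $p$ already agree on the rows over $A$, because $q\le p_A$. The conflicts between them lie only at coordinates $(\xi,i,n)$ with $\xi\notin A$ where both are defined and disagree. Let $D$ be this finite set of conflicts, and let $K=\clrho(\suppnode(q)\cup\suppnode(p))$, a finite $\rho$-window containing every conflicting node. For each bit index $n$ and row number $i$, consider the desired difference pattern in $\Ftwo^K$: flip exactly the conflicting nodes at $(i,n)$, leave nodes of $A\cap K$ untouched, and do not care about other nodes. By \Cref{lem:starspan} this pattern is a sum of star vectors $\mathbf 1_{\Star_K(\xi)}$.

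\textbf{Main obstacle.} The stars used must not touch $A$, and their effect outside $K$ must be harmless.

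For the first issue, use the triangular structure of the star basis. Solve for the pattern by processing $K$ from the top of the forest downward. If a star $\Star_K(\xi)$ is needed, then the pattern is nonzero at $\xi$ after the corrections from lower nodes. Since $A$ is $\rho$-closed, no element of $A$ has a predecessor outside $A$; equivalently, successors of nodes outside $A$ lie outside $A$. So a star rooted at $\xi\notin A$ never meets $A$. We therefore need to check that the triangular solution uses only roots outside $A$. Roots $\xi\in A$ whose stars contain successors outside $A$ would be problematic. Here we exploit the freedom in the target pattern. Inducting from the leaves upward, we only ever need to fix the nodes where the pattern is prescribed. We choose the star at each non-$A$ node $\eta$ (which flips $\eta$ and its successors) to fix $\eta$'s own bit, processing parents before children. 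Then stars rooted in $A$ are never used, and the pattern on $A$ stays zero.

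For the second issue, the actual generator $\tau^{\mathrm{1cas}}_{\xi,i,\{n\}}$ also flips rows $(\eta,i)$ with $\eta\in\Succ_\rho(\xi)\setminus K$. Such coordinates are outside $\operatorname{dom}(p)\cup\operatorname{dom}(q)$, since $K$ contains all nodes of both domains, so these extra flips are irrelevant to compatibility.

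Composing these single-bit generators gives $\pi\in\Fix^{\mathrm{1cas}}_{\rho}(A)$, because no star rooted outside $A$ meets $A$. On the common domain, $\pi(q)$ agrees with $p$: the conflict set $D$ is exactly flipped, while every non-conflicting coordinate of $\operatorname{dom}(q)$ lies on a node that is either unflipped or allowed to flip because $p$ is undefined there. The "don't care" nodes must be chosen to include those coordinates where only $q$ is defined. I expect this bookkeeping, namely arranging that the top-down star choices avoid flipping coordinates where $q$ and $p$ already agree, to be the delicate point. The fallback is to refine $K$ per $(i,n)$ and argue that flipping a shared agreeing coordinate can be compensated by its own star root, which again lies outside $A$ by $\rho$-closedness.
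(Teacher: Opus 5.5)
Your proof is correct and is essentially the paper's argument: from a $q\le p_A$ forcing the opposite value you build $\pi\in\Fix^{\mathrm{1cas}}_{\rho}(A)$ out of single-bit star generators, using the triangular structure behind \Cref{lem:starspan}, and conclude with the symmetry lemma. The point you flag as delicate is already settled by your own parents-before-children recursion: at a fixed $(i,n)$ a node's bit is moved only by its own star and its parent's, so prescribing target $0$ on agreeing common-domain coordinates and correcting each such node with its own star (rooted outside $A$) works; moreover, aiming only for compatibility of $\pi(q)$ with $p$ while keeping all roots outside $A$ sidesteps two slips in the paper's write-up, namely its assertion that the window $K$ is disjoint from $A$ (every nonempty $\rho$-closed set contains $0$) and its zero-padding of $p$ on rows over $A$, which can make $p'$ and $q'$ disagree there.
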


\begin{proof}
Let $r$ be the restriction of $p$ to the rows over $A$. Suppose toward a contradiction that $p\forces \check m\in\dot x$ but $r$ does not decide the same truth value. Then some condition $q\le r$ forces $\check m\notin\dot x$. The conditions $p$ and $q$ agree on every coordinate over $A$.

Refine $p$ and $q$ to common-domain conditions $p',q'$ by padding undefined coordinates with $0$ on the finite union of their domains. Let $K$ be the finite $\rho$-closure of the set of nodes outside $A$ mentioned by either $p'$ or $q'$. Then $K$ is a finite $\rho$-window disjoint from $A$.

Fix a local row number $i$ and bit coordinate $n$ appearing in this common domain. The difference between the $K$-patterns of $p'$ and $q'$ at $(i,n)$ is a vector in $\Ftwo^K$. By \Cref{lem:starspan}, that vector is realized by a finite product of one-step cascade generators supported inside $K$. Repeating this for each relevant pair $(i,n)$, we obtain a family of automorphisms supported inside $K$. Automorphisms attached to different pairs $(i,n)$ act on disjoint bit coordinates of any finite condition, so they commute and compose without interference. Multiplying them, we obtain
\[
\pi\in\Fix^{\mathrm{1cas}}_{\rho}(A)
\]
such that $\pi(p')=q'$.

Because $A$ supports $\dot x$, one has $\pi(\dot x)=\dot x$. Applying the symmetry lemma for the forcing relation yields
\[
q'=\pi(p')\forces \check m\in \pi(\dot x)=\dot x,
\]
contradicting that $q'$ forces $\check m\notin\dot x$. The case in which $p\forces \check m\notin\dot x$ is identical, using a witness $q\le r$ with $q\forces \check m\in\dot x$ and exchanging the roles of membership and non-membership. Therefore $r$ already decides the same truth value as $p$.
\end{proof}

\begin{theorem}[Rank-$1$ normalization]\label{thm:rank1norm}
Let $\dot x\in\HS$ be a name for a subset of $\omega$, supported by a countable $\rho$-closed set $A\subseteq\omegaone$. Then there exists a rank-$1$ packet scheme $\mathfrak S$ over $A$ such that
\[
\forces \dot x=\dot x_{\mathfrak S}.
\]
\end{theorem}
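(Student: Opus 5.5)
For each $m<\omega$ I will take $\mathcal C_m$ to consist of the conditions $r$ with $\suppnode(r)\subseteq A$ and $r\forces\check m\in\dot x$. Then I will check two things: that $\forces\dot x=\dot x_{\mathfrak S}$, and that this can be cut down to a \emph{countable} family of \emph{exact} packets.

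\textbf{Step 1 (packets are exact).} The conditions whose node-support lies in $A$ need not have $\rho$-closed node-support. If $\suppnode(r)=F\subseteq A$ is finite, then $\clrho(F)$ is finite, because $\rho$ is regressive and each node has a finite predecessor chain. It is also contained in $A$, since $A$ is $\rho$-closed. I will replace $r$ by all of its extensions $r'$ obtained by adding finitely many bits, say the single bit at $(\zeta,0,0)$ for each $\zeta\in\clrho(F)\setminus F$, in both possible values. Each such $r'$ has node-support exactly $\clrho(F)$, so it is an exact packet. Moreover $r'\le r$ still forces $\check m\in\dot x$, and the finitely many $r'$ are predense below $r$.

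So $\mathcal C_m$ will be the set of exact packets $r$ with $\suppnode(r)\subseteq A$ and $r\forces\check m\in\dot x$. This set is countable, because there are only countably many finite conditions on rows over the countable set $A$.

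\textbf{Step 2 (soundness).} If $r\in\mathcal C_m$ lies in the generic $H_1$, then $r\forces\check m\in\dot x$, so $m\in\dot x^{H_1}$. This gives $\forces\dot x_{\mathfrak S}\subseteq\dot x$.

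\textbf{Step 3 (completeness).} Conversely, suppose $m\in\dot x^{H_1}$. Then some $p\in H_1$ forces $\check m\in\dot x$. By \Cref{lem:decision}, the restriction $r=p\restriction A$ (rows over $A$) also forces $\check m\in\dot x$, and $r\in H_1$. By Step 1 the exact extensions $r'$ of $r$ are predense below $r$, so one of them lies in $H_1$, and that one belongs to $\mathcal C_m$. Hence $m\in\dot x_{\mathfrak S}^{H_1}$.

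The argument is carried out for an arbitrary generic, and the forcing theorem then yields $\forces\dot x=\dot x_{\mathfrak S}$. \Cref{prop:packet-hs} confirms that $\mathfrak S$ is a rank-$1$ packet scheme whose name is supported by $A$.

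\textbf{Main obstacle.} The only delicate point is the appeal to \Cref{lem:decision}, which does the real work. Here I only need it for conditions that already force $\check m\in\dot x$, and that is exactly the case the lemma covers. I also need to note that $\emptyset$-support is harmless: a condition $r$ with no defined coordinates over $A$ has $F=\varnothing$, so $\clrho(F)=\varnothing$ and $r$ is itself the empty packet, which is exact. With that case handled, the remaining steps are the routine countability and closure checks above.
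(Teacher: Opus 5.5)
Your proof is correct. It differs from the paper's proof in two respects.

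\emph{How $\mathcal C_m$ is chosen.} The paper fixes, for each $m$, a countable maximal antichain $D_m$ deciding $\check m\in\dot x$, which uses the ccc of $\mathbb P_1$. It sets $\mathcal C_m=\{r_p:p\in D_m^+\}$, where $r_p$ is the restriction of $p$ to the rows over $A$, and gets completeness from the generic meeting $D_m$. You instead take the canonical family of all exact packets over $A$ forcing $\check m\in\dot x$. Its countability follows directly from the countability of $A$, so no chain condition is needed, and you get completeness from the truth lemma plus \Cref{lem:decision}.

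\emph{Exactness.} The more substantive difference is your Step~1. The paper asserts that $\suppnode(r_p)$ is $\rho$-closed because $A$ is. But $\suppnode(r_p)=\suppnode(p)\cap A$ need not be closed under $\rho$. For example, take $\dot x$ the canonical name for $c_{\xi,0}$ with $\xi\neq 0$, $A=\clrho(\{\xi\})$, and $D_0$ the antichain of the two one-bit conditions at $(\xi,0,0)$. The resulting packet has node-support $\{\xi\}$, so it is not exact. Your repair fixes this. You replace $r$ by its finitely many extensions with node-support exactly $\clrho(\suppnode(r))\subseteq A$; these still force $\check m\in\dot x$ and are predense below $r$. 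The paper's antichain version needs the same repair: replace each $r_p$ by those extensions.

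With that repair both routes go through. Yours has the further advantage that $\mathfrak S$ depends only on $\dot x$ and $A$, not on a choice of antichains.
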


\begin{proof}
Fix $m<\omega$. By \Cref{rem:ccc}, $\mathbb P_1$ is ccc. Choose a countable maximal antichain $D_m$ deciding whether $\check m\in\dot x$. Let
\[
D_m^+=\{p\in D_m:p\forces \check m\in\dot x\}.
\]
For each $p\in D_m^+$, let $r_p$ be the restriction of $p$ to the rows over $A$. By \Cref{lem:decision}, the condition $r_p$ still forces $\check m\in\dot x$. Because $p$ is finite and $A$ is $\rho$-closed, the node-support of $r_p$ is finite and $\rho$-closed, so $r_p$ is an exact finite cascade packet over $A$.

Define
\[
\mathcal C_m=\{r_p:p\in D_m^+\},
\qquad
\mathfrak S=(A,\langle\mathcal C_m:m<\omega\rangle).
\]
Each $\mathcal C_m$ is countable, so $\mathfrak S$ is a rank-$1$ packet scheme. Let $\dot y=\dot x_{\mathfrak S}$.

We claim that $\forces \dot x=\dot y$. If $m\in \dot x[G]$ in some generic extension, then there is $p\in D_m^+\cap G$. Since $r_p$ is a restriction of $p$, it is weaker in the forcing order; as $G$ is upward-closed and $p\in G$, we have $r_p\in G$. Therefore by the definition of $\dot x_{\mathfrak S}$ one has $m\in \dot y[G]$. Conversely, if $m\in \dot y[G]$, then some $r_p\in \mathcal C_m\cap G$. But $r_p\forces \check m\in\dot x$, so $m\in \dot x[G]$. Thus $\dot x$ and $\dot y$ have the same members in every generic extension, and therefore $\forces \dot x=\dot x_{\mathfrak S}$.
\end{proof}

\begin{theorem}[Two-layer coding consequence]\label{thm:twolayer}
Every rank-$1$ hereditarily symmetric real in $\Ncas$ admits a two-layer code by a countable $\rho$-closed support together with a countable family of exact finite cascade packets over that support.
\end{theorem}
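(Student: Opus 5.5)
The plan is to derive \Cref{thm:twolayer} directly from \Cref{thm:rank1norm}, and to make precise what ``two-layer code'' means. Let $x\in\Ncas$ be a subset of $\omega$. Then $x=\dot x^{H_1}$ for some $\dot x\in\HS$. Replacing $\dot x$ by $\{(\check m,p): m<\omega,\ p\forces \check m\in\dot x\}$ if necessary, we may assume $\dot x$ is a name for a subset of $\omega$ of the rank-$1$ form described at the start of \Cref{sec:rank1}. This replacement is fixed by every automorphism fixing $\dot x$, by the symmetry lemma, so it is still in $\HS$.

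Since $\dot x$ is symmetric, its stabilizer contains some basic group $\Fix^{\mathrm{1cas}}_{\rho}(A_0)$ with $A_0$ countable. The filter is generated by such groups, and a finite intersection is again of this form after taking a union of the $A$'s. Set $A=\clrho(A_0)$. This set is countable, since each $\rho$-chain is finite by regressiveness, and it is $\rho$-closed. Moreover $\Fix^{\mathrm{1cas}}_{\rho}(A)=\Fix^{\mathrm{1cas}}_{\rho}(A_0)$ by definition, so $A$ is a countable $\rho$-closed support for $\dot x$.

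Now apply \Cref{thm:rank1norm} to obtain a packet scheme $\mathfrak S=(A,\langle\mathcal C_m\rangle)$ with $\forces\dot x=\dot x_{\mathfrak S}$. Hence $x=\dot x_{\mathfrak S}^{H_1}=\{m:\mathcal C_m\cap H_1\neq\varnothing\}$. The first layer of the code is the support $A$. The second layer is the countable family $\mathcal C=\bigcup_m\{m\}\times\mathcal C_m$ of tagged exact finite cascade packets over $A$; it is countable as a countable union of countable sets in $V[G_0]$, where choice holds. Together with the generic, the pair $(A,\mathcal C)$ determines $x$ via this decoding formula. By \Cref{prop:packet-hs} every such pair yields an element of $\Ncas$, so the coding is onto the rank-$1$ reals.

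The main obstacle is only the bookkeeping behind the parenthetical ``in the metatheory, via a well-orderable base of packets''. The set of all exact finite cascade packets is a set of finite partial functions on $\omegaone\times\omega\times\omega$, so it is well-orderable in $V[G_0]$. The assignment $x\mapsto(A,\mathcal C)$ is made in $V[G_0][H_1]$ or $V[G_0]$ by choosing a name, a support and antichains. It is not claimed to be definable inside $\Ncas$. I would state the theorem in exactly this sense and not attempt an internal coding.
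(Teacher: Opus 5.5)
Your proposal is correct and follows the paper's route: the theorem is read off from \Cref{thm:rank1norm}, with the support $A$ as the first layer, the countable packet family as the second layer, and the well-orderability of the finite packets (in $V[G_0]$, i.e.\ in the metatheory) supplying the coding. The extra bookkeeping you add is sound and fills in hypotheses that the paper's proof of \Cref{thm:rank1norm} uses silently: passing to the nice name $\{(\check m,p):p\forces\check m\in\dot x\}$, which $1$ forces to be a subset of $\omega$, and producing a countable $\rho$-closed support $\clrho(A_0)$ with $\Fix^{\mathrm{1cas}}_{\rho}(\clrho(A_0))=\Fix^{\mathrm{1cas}}_{\rho}(A_0)$.
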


\begin{proof}
By \Cref{thm:rank1norm}, every rank-$1$ hereditarily symmetric real is represented by a scheme $\mathfrak S=(A,\langle\mathcal C_m:m<\omega\rangle)$ where $A\subseteq\omegaone$ is countable and $\rho$-closed and each $\mathcal C_m$ is a countable family of exact finite cascade packets over $A$.

Such a scheme has two layers of data. The first layer is the countable support $A$. The second layer is the countable family of finite packets carried by that support. In the metatheory, the class of all finite $\rho$-closed packets is well-orderable, so after fixing an ordinal $\eta$ coding that base, each $\mathcal C_m$ is coded by a subset of $\eta$, and the whole countable sequence $\langle\mathcal C_m:m<\omega\rangle$ is coded by a subset of $\Pow(\eta)$. Thus every rank-$1$ real admits a two-layer code consisting of a countable $\rho$-closed support together with a countable family of exact finite packets over that support.
\end{proof}

\section{Even exact-cardinality profile}\label{sec:even}

The one-step cascade geometry is natively binary. The complement-pair construction uses this shielding pattern to obstruct selectors on $2$-element families, and the divisibility lemma then propagates the failure to every even exact-cardinality principle.

The transition from \Cref{sec:starspan,sec:rank1} to the present section is conceptual as well as formal. The finite star-span lemma identifies the local degrees of freedom available off a fixed support, and the rank-$1$ packet calculus shows that these degrees of freedom can be compressed to finite $\rho$-closed windows. The same local binary geometry is exactly what drives the complement-pair obstruction below: a fresh one-step cascade toggle can preserve the chosen support while exchanging the two semantic values attached to one pair.

Fix $i<\omega$. For distinct $\beta,\gamma<\omegaone$, let
\[
E^i_{\beta,\gamma}=\{n<\omega:c_{\beta,i}(n)=c_{\gamma,i}(n)\},
\qquad
X^i_{\beta,\gamma}=[E^i_{\beta,\gamma}]_{\Fin}\in\Pow(\omega)/\Fin,
\]
and define the complement-pair
\[
Q^i_{\beta,\gamma}=\{X^i_{\beta,\gamma},\neg X^i_{\beta,\gamma}\}.
\]
Let
\[
\mathcal B_i=\{Q^i_{\beta,\gamma}:\beta,\gamma<\omegaone,\ \beta\neq\gamma\}.
\]

\begin{proposition}\label{prop:Bi}
The family $\mathcal B_i$ is a hereditarily symmetric family of $2$-element sets.
\end{proposition}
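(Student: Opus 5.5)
The plan has three parts. First write down canonical names, then check that the whole family $\mathcal B_i$ has a name fixed by every automorphism of $\Gcas$. Finally check that each member is a genuine $2$-element set, which needs only a genericity argument.

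\textbf{Names.} Write $\dot c_{\beta,i}$ for the canonical name of the generic real on row $(\beta,i)$. Its members are pairs $(\check n, p)$ with $p$ the single-bit condition $\{((\beta,i,n),1)\}$. Let $\dot E^i_{\beta,\gamma}$ be the canonical name for the equality set. Its members are $(\check n,p)$ where $p$ is one of the two-bit conditions assigning equal values to $(\beta,i,n)$ and $(\gamma,i,n)$. The class $X^i_{\beta,\gamma}$ is named as the set of all $\Fin$-variants of $E^i_{\beta,\gamma}$, that is, names of the form $\dot E^i_{\beta,\gamma}\,\triangle\,\check F$ with $F$ finite. The name for $\neg X^i_{\beta,\gamma}$ is built the same way from the inequality set.

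\textbf{Hereditary symmetry of pieces.} Every condition occurring in these names has node-support $\{\beta,\gamma\}$. So any automorphism that is trivial on the rows over $\clrho(\{\beta,\gamma\})$ fixes them. This makes $\dot E^i_{\beta,\gamma}$, its complement, and the two class names all supported by the countable set $\clrho(\{\beta,\gamma\})$, and likewise the pair name $\dot Q^i_{\beta,\gamma}$. Each of these names is hereditarily symmetric, since the names below them are $\check F$ or are themselves supported by the same set.

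\textbf{Invariance of the family (main step).} Take the name $\dot{\mathcal B}_i=\{(\dot Q^i_{\beta,\gamma},1):\beta\ne\gamma\}$. An arbitrary $\pi\in\Gcas$ acts on the bits of row $(\beta,i)$ by XOR with some ground-model set $t_\beta\subseteq\omega$, and on row $(\gamma,i)$ by XOR with some $t_\gamma$. The two sets are finite or cofinite, because $\pi$ is a finite product of generators and XOR commutes across rows. Therefore $\pi(\dot E^i_{\beta,\gamma})$ names the equality set with $t_\beta\triangle t_\gamma$ flipped.
\begin{itemize}
\item If $t_\beta\triangle t_\gamma$ is finite, $\pi$ fixes each of the two class names (as sets of $\Fin$-variants).
\item If $t_\beta\triangle t_\gamma$ is cofinite, $\pi$ swaps the two class names.
\end{itemize}
In either case $\pi(\dot Q^i_{\beta,\gamma})=\dot Q^i_{\beta,\gamma}$. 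So $\dot{\mathcal B}_i$ is fixed by all of $\Gcas$ and lies in $\HS$.

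The point needing care is that the image is literally the same name, not just forced equal. This is why the class names must be chosen closed under finite modifications.

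\textbf{Two elements.} It remains to see that $Q^i_{\beta,\gamma}$ has exactly two elements, i.e.\ $X\ne\neg X$ in $\Pow(\omega)/\Fin$. This holds because, by genericity of the Cohen reals, both $E^i_{\beta,\gamma}$ and its complement are infinite.
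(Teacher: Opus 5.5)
Your proposal is correct and takes essentially the same route as the paper. Both use support by $\clrho(\{\beta,\gamma\})$, and both use the observation that an automorphism flips equality between rows $(\beta,i)$ and $(\gamma,i)$ on a finite or cofinite set, hence fixes or swaps $X^i_{\beta,\gamma}$ and $\neg X^i_{\beta,\gamma}$. The paper checks this generator by generator and only at the level of classes. You handle arbitrary $\pi\in\Gcas$ through $t_\beta\triangle t_\gamma$ and verify literal invariance of names closed under finite modification, which is the more careful version.

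One small point: $X\neq\neg X$ needs no genericity. $E^i_{\beta,\gamma}$ and its complement partition $\omega$, so they differ on an infinite set, and this is the argument the paper uses.
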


\begin{proof}
For each $\beta\neq\gamma$, the class $X^i_{\beta,\gamma}$ is a legitimate element of $\Pow(\omega)/\Fin$, and $X^i_{\beta,\gamma}\neq \neg X^i_{\beta,\gamma}$: otherwise the set $E^i_{\beta,\gamma}$ would be equal mod finite to its complement, impossible because the two partition $\omega$. Hence each $Q^i_{\beta,\gamma}$ has exact size $2$.

The class $X^i_{\beta,\gamma}$ depends only on the two rows $(\beta,i)$ and $(\gamma,i)$, so it has a countable $\rho$-closed support, for instance $\clrho(\{\beta,\gamma\})$. The same is true of its complement and of the unordered pair $Q^i_{\beta,\gamma}$.

It remains to verify that every basic cascade automorphism sends $X^i_{\beta,\gamma}$ either to itself or to its Boolean complement in $\Pow(\omega)/\Fin$. Let $\tau=\tau^{\mathrm{1cas}}_{\xi,j,s}$ be a basic generator. If $j\neq i$, then $\tau$ does not affect either row $(\beta,i)$ or $(\gamma,i)$, so $E^i_{\beta,\gamma}$ is unchanged. Assume now $j=i$. There are three cases.
\begin{enumerate}[label=\textup{(\arabic*)},leftmargin=2.4em]
    \item Neither $(\beta,i)$ nor $(\gamma,i)$ is toggled by $\tau$. Then $E^i_{\beta,\gamma}$ is unchanged.
    \item Both $(\beta,i)$ and $(\gamma,i)$ are toggled by $\tau$ along the same set $s$. Then equality between the two rows is preserved pointwise, so $E^i_{\beta,\gamma}$ is unchanged.
    \item Exactly one of $(\beta,i)$ and $(\gamma,i)$ is toggled by $\tau$ along $s$. Then equality is flipped exactly on $s$, so modulo $\Fin$ the class $X^i_{\beta,\gamma}$ is either unchanged (if $s$ is finite) or sent to its complement (if $s$ is cofinite, as in the applications below).
\end{enumerate}
In every case the unordered pair $Q^i_{\beta,\gamma}=\{X^i_{\beta,\gamma},\neg X^i_{\beta,\gamma}\}$ is preserved setwise. Therefore the whole family $\mathcal B_i$ is hereditarily symmetric.
\end{proof}

\begin{theorem}[No selector on the complement-pair family]\label{thm:no-selector-Bi}
There is no hereditarily symmetric choice function on $\mathcal B_i$.
\end{theorem}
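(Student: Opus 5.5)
The plan is to replay the fresh-support shielding argument from the proof of \Cref{thm:base}, now aimed at a selector instead of an ultrafilter. Suppose toward a contradiction that $\dot f\in\HS$ and that some $p\in\mathbb P_1$ forces ``$\dot f$ is a choice function on $\mathcal B_i$''. By the definition of $\Fcas$ and \Cref{prop:filter-omega1}, there is a countable $\rho$-closed set $A\subseteq\omegaone$ with $\Fix^{\mathrm{1cas}}_{\rho}(A)\subseteq\operatorname{sym}(\dot f)$. Replacing $A$ by $\clrho(A)$, which is still countable, we may assume $A$ is $\rho$-closed. We then apply \Cref{lem:fresh-separation} to obtain $\beta\neq\gamma$ outside $A$ with $\gamma\notin\Succ_\rho(\beta)$ and $\Succ_\rho(\beta)\cap A=\varnothing$. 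We also want $\beta\notin\Succ_\rho(\gamma)$; this is automatic, since the proof of that lemma picks $\gamma<\beta$ and $\rho$ is regressive.

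Next we pass to a decision. Let $\dot Q$ be the canonical name for $Q^i_{\beta,\gamma}$ and $\dot X$ the canonical name for $X^i_{\beta,\gamma}$. Since $p$ forces $\dot f(\dot Q)\in\{\dot X,\neg\dot X\}$, there is $q\le p$ deciding which value is taken. Without loss of generality $q\forces \dot f(\dot Q)=\dot X$; the other case is symmetric. Put $C=C(q,\beta,i)$, which is finite by \Cref{lem:finite-shielding}, and let $s=\omega\setminus C$ be cofinite. Set $\tau=\tau^{\mathrm{1cas}}_{\beta,i,s}$. Then \Cref{lem:finite-shielding} gives $\tau(q)=q$.

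We then check three things about $\tau$.
\begin{enumerate}[label=\textup{(\roman*)},leftmargin=2.4em]
    \item $\tau\in\Fix^{\mathrm{1cas}}_{\rho}(A)$. The only rows moved by $\tau$ lie over $\{\beta\}\cup\Succ_\rho(\beta)$, and this set is disjoint from $A=\clrho(A)$. Hence $\tau(\dot f)=\dot f$.
    \item $\tau(\dot Q)=\dot Q$. This holds as a name, because $\tau$ moves exactly one of the rows $(\beta,i)$, $(\gamma,i)$ and flips $E^i_{\beta,\gamma}$ on the cofinite set $s$, as in case (3) of \Cref{prop:Bi}. At the level of names the point is that $\tau$ preserves the unordered pair.
    \item $\tau(\dot X)$ is forced to equal $\neg\dot X$ in $\Pow(\omega)/\Fin$.
\end{enumerate}
Applying the symmetry lemma to $q\forces\dot f(\dot Q)=\dot X$ then yields $q=\tau(q)\forces \dot f(\dot Q)=\neg\dot X$. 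This contradicts $\dot X\neq\neg\dot X$, which is also shown in \Cref{prop:Bi}. So no such $\dot f$ exists.

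The main obstacle is making step (ii) precise at the level of names rather than values, since $\tau(\dot Q)$ need only be forced equal to $\dot Q$. I would handle this by working with $\forces\tau(\dot Q)=\dot Q$ together with $\forces\tau(\dot X)=\neg\dot X$, and then rewriting the forced statement through these equalities. That is legitimate because forcing respects forced equalities.

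A secondary worry is whether $\gamma$'s row or $\gamma$'s own successors are affected. They are not: $\tau$ touches only $\beta$ and $\Succ_\rho(\beta)$, and $\gamma$ is neither of these.
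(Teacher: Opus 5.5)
Your proposal is correct and follows the paper's proof essentially step for step. The shared steps are: fresh separation off a countable $\rho$-closed support, a decision $q\le p$, a shielded cofinite toggle $\tau^{\mathrm{1cas}}_{\beta,i,s}$ that fixes $q$ and $\dot f$ while swapping the two members of $Q^i_{\beta,\gamma}$, and then the symmetry lemma. Your explicit treatment of $\tau(\dot Q)=\dot Q$ and $\tau(\dot X)=\neg\dot X$ as forced equalities of names is a welcome bit of extra care.

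One small correction: your aside that $\beta\notin\Succ_\rho(\gamma)$ follows automatically from $\gamma<\beta$ is false, since nothing prevents $\rho(\beta)=\gamma$. The condition is never used, though: $\tau^{\mathrm{1cas}}_{\beta,i,s}$ only moves rows over $\{\beta\}\cup\Succ_\rho(\beta)$. You can simply drop it.
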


\begin{proof}
Suppose toward a contradiction that $p\in\mathbb P_1$ forces that $\dot f$ is a choice function on $\mathcal B_i$, and let $A\subseteq\omegaone$ be a countable $\rho$-closed support for $\dot f$. By \Cref{lem:fresh-separation}, choose fresh nodes $\beta,\gamma\notin A$ such that $\beta\neq\gamma$, $\gamma\notin\Succ_\rho(\beta)$, and no immediate successor of $\beta$ lies in $A$. Let
\[
Q=Q^i_{\beta,\gamma}\in\mathcal B_i.
\]
Because $p$ forces that $\dot f$ is defined on the whole family, some condition $q\le p$ decides the value of $\dot f(Q)$.

Let $C=C(q,\beta,i)\subseteq\omega$ be as in \Cref{lem:finite-shielding}, choose a cofinite ground-model set $s\subseteq\omega\setminus C$, and set
\[
\tau=\tau^{\mathrm{1cas}}_{\beta,i,s}.
\]
Then \Cref{lem:finite-shielding} gives $\tau(q)=q$, while the freshness of $\beta$ implies $\tau\in\Fix^{\mathrm{1cas}}_{\rho}(A)$, so $\tau(\dot f)=\dot f$.

Now $\tau$ toggles the row $(\beta,i)$ on a cofinite set and leaves the row $(\gamma,i)$ untouched. Thus $\tau$ sends $X^i_{\beta,\gamma}$ to its complement in $\Pow(\omega)/\Fin$, so $\tau(Q)=Q$ while the two members of $Q$ are exchanged. If $q\forces \dot f(Q)=X^i_{\beta,\gamma}$, then applying $\tau$ gives
\[
q=\tau(q)\forces \dot f(Q)=\tau(X^i_{\beta,\gamma})=\neg X^i_{\beta,\gamma},
\]
contradicting that $q$ already decided the value of $\dot f(Q)$. The same contradiction arises if $q\forces \dot f(Q)=\neg X^i_{\beta,\gamma}$. Therefore no supported selector name on $\mathcal B_i$ exists.
\end{proof}

\begin{proposition}[Divisibility propagation]\label{prop:div}
For positive integers $m,k$, one has in $\ZF$:
\[
\mathrm{C}_{mk}\Rightarrow \mathrm{C}_m.
\]
Hence $\neg\mathrm{C}_m\Rightarrow \neg\mathrm{C}_{mk}$. In particular,
\[
\neg\Ctwo\Rightarrow \neg\Cn
\qquad\text{for every even }n.
\]
\end{proposition}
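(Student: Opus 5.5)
The plan is the classical product trick: given a family $\mathcal F$ of $m$-element sets, build a family of $mk$-element sets by tensoring each member with a fixed $k$-element set. Apply $\mathrm{C}_{mk}$ to the new family, then recover a choice on $\mathcal F$ by projecting back. Everything happens in $\ZF$ and does not use the model $\Ncas$ at all.

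Step one. Fix the ground set $k=\{0,\dots,k-1\}$. For $X\in\mathcal F$ set $X'=X\times k$. Since $|X|=m$, we have $|X'|=mk$. The map $X\mapsto X'$ is injective on nonempty $X$, because $X$ is the first projection of $X'$. The family $\mathcal F'=\{X':X\in\mathcal F\}$ is a set by Replacement.

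Step two. Apply $\mathrm{C}_{mk}$ to get a choice function $g$ on $\mathcal F'$. Define $f(X)=\pi_1(g(X\times k))$, where $\pi_1$ is the first-coordinate projection. Then $f(X)\in X$ for every $X\in\mathcal F$, so $f$ is a choice function on $\mathcal F$. This is definable from $g$, so no further choice is used. This proves $\mathrm{C}_{mk}\Rightarrow\mathrm{C}_m$.

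Step three. The contrapositive gives $\neg\mathrm{C}_m\Rightarrow\neg\mathrm{C}_{mk}$. Taking $m=2$ and $k=n/2$ for even $n\ge 2$ yields $\neg\Ctwo\Rightarrow\neg\Cn$.

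There is no real obstacle; the argument is elementary. The only point to check is that the sets $X\times k$ have exactly $mk$ elements and that distinct $X$ give distinct $X'$, so that $g$ is well defined as a function on $\mathcal F'$. Both facts are immediate.
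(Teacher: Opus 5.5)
Your proposal is correct and is essentially the paper's proof: replace each $m$-element set $A$ by $A\times k$, apply $\mathrm{C}_{mk}$, project to the first coordinate, and contrapose with $m=2$, $k=n/2$. The only cosmetic difference is that the paper works with an indexed family $\{A_t:t\in I\}$, so it does not need your injectivity check for $X\mapsto X\times k$; that check is correct and harmless in your set-of-sets formulation.
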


\begin{proof}
Let $\{A_t:t\in I\}$ be a family of nonempty $m$-element sets. Fix the set $[k]=\{0,\dots,k-1\}$ and form the family
\[
B_t=A_t\times [k]\qquad (t\in I).
\]
Each $B_t$ has size $mk$. A choice function on the family $\{B_t:t\in I\}$ yields elements $(a_t,j_t)\in B_t$, and then projection to the first coordinate gives a choice function $t\mapsto a_t$ on $\{A_t:t\in I\}$. Thus $\mathrm{C}_{mk}\Rightarrow \mathrm{C}_m$, and the remaining statements follow by contraposition.
\end{proof}

\begin{theorem}[Even exact-cardinality theorem bank]\label{thm:even-bank}
In $\Ncas$,
\[
\neg\Ctwo,\qquad \neg\ACfin,
\qquad\text{and}\qquad
\neg\Cn\ \text{for every even }n\geq 2.
\]
\end{theorem}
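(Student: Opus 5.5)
The plan is to assemble the theorem from the results already in hand, in three moves. First, $\neg\Ctwo$ in $\Ncas$ follows from \Cref{prop:Bi} together with \Cref{thm:no-selector-Bi}. Fix any $i<\omega$. By \Cref{prop:Bi}, $\mathcal B_i$ has a hereditarily symmetric name; each member has one, and the family itself is fixed by every generator, hence by all of $\Gcas$. So $\mathcal B_i\in\Ncas$, and $\Ncas$ sees it as a family of $2$-element sets. The check that each $Q^i_{\beta,\gamma}$ has exactly two elements is absolute, since $X^i_{\beta,\gamma}\neq\neg X^i_{\beta,\gamma}$ holds in $V[G_0][H_1]$ and hence in the submodel.

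Any choice function for $\mathcal B_i$ lying in $\Ncas$ would have a hereditarily symmetric name $\dot f$ and some condition $p$ forcing it to be a choice function. \Cref{thm:no-selector-Bi} rules this out. Hence $\Ncas\models\neg\Ctwo$.

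Second, $\neg\ACfin$ is immediate, since $\ACfin$ (choice for families of nonempty finite sets) trivially implies $\Ctwo$ by restricting to $2$-element families. Third, for even $n=2k$ with $k\ge1$, \Cref{prop:div} applied with $m=2$ gives $\mathrm{C}_{2k}\Rightarrow\Ctwo$ in $\ZF$. Since $\Ncas\models\ZF$ by \Cref{thm:base}, the contrapositive holds inside $\Ncas$ and yields $\neg\Cn$. Concretely, the family $\{Q\times[k]:Q\in\mathcal B_i\}$ lies in $\Ncas$ and has no selector there.

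The only step needing care is the translation between ``no hereditarily symmetric selector name'' and ``no selector in $\Ncas$''. This needs two facts: every element of $\Ncas$ has a name in $\HS$, and $\Fcas$ is generated by the groups $\Fix^{\mathrm{1cas}}_{\rho}(A)$ with $A$ countable. From the second fact we may take the support $A$ to be $\rho$-closed, replacing it by $\clrho(A)$, which is still countable because each node has only finitely many iterated predecessors under a regressive map. This is exactly the form of support used in \Cref{thm:no-selector-Bi}.

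I expect this support bookkeeping to be the main obstacle, though a mild one. I would state it explicitly rather than re-prove the shielding argument.
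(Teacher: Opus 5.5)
Your proposal is correct and follows essentially the same route as the paper. You get $\neg\Ctwo$ from \Cref{prop:Bi} and \Cref{thm:no-selector-Bi}, $\neg\ACfin$ because $\ACfin$ trivially implies $\Ctwo$, and $\neg\Cn$ for even $n$ from \Cref{prop:div} with $m=2$; your extra bookkeeping (passing from a selector in $\Ncas$ to a hereditarily symmetric name, and shrinking a filter member to some $\Fix^{\mathrm{1cas}}_{\rho}(A)$ with $A=\clrho(A)$ countable) only makes explicit what the paper's proof leaves implicit.
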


\begin{proof}
By \Cref{prop:Bi,thm:no-selector-Bi}, the family $\mathcal B_i$ is a hereditarily symmetric family of nonempty $2$-element sets and has no hereditarily symmetric selector. Thus $\Ncas\models\neg\Ctwo$.

Since every family of $2$-element sets is, in particular, a family of nonempty finite sets, $\neg\Ctwo$ implies $\neg\ACfin$. Finally, \Cref{prop:div} shows in pure $\ZF$ that $\mathrm{C}_{mk}\Rightarrow \mathrm{C}_m$. Applying the contrapositive with $m=2$, we obtain $\neg\Cn$ for every even $n\ge 2$.
\end{proof}

\section{Odd-side structural boundary}\label{sec:odd}

Once the even exact-cardinality profile is in hand, the first unresolved case is $\Cthree$. The point of this section is to record the strongest boundary statements currently supported by the bounded packet calculus. The one-step cascade automorphisms acting on any fixed finite $\rho$-window $K$ generate an elementary abelian $2$-group: the star generators act by $\mathbb F_2$-addition on $\mathbb F_2^K$. The following three propositions explain why this binary structure is the natural boundary of the current method.

\begin{proposition}[Odd-cardinality fixed-point principle for local $2$-group actions]\label{prop:odd-fixed}
If a finite $2$-group $G$ acts on a finite set $S$ of odd cardinality, then $S$ contains a $G$-fixed point. In particular, a support-fixed local mechanism factoring through a finite $2$-group cannot, by itself, force a contradiction to $\Cthree$.
\end{proposition}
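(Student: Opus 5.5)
The first assertion follows from the orbit--stabilizer theorem. Decompose $S$ into $G$-orbits, $S=\bigsqcup_{j} G\cdot s_j$. For each orbit, $|G\cdot s_j|=[G:G_{s_j}]$, where $G_{s_j}$ is the stabilizer. Since $|G|=2^k$ for some $k$, every orbit has size a power of $2$. An orbit therefore has odd size exactly when it has size $2^0=1$, that is, exactly when its point is $G$-fixed. Reducing the class equation
\[
|S|=\sum_j [G:G_{s_j}]
\]
modulo $2$ gives $|S|\equiv |S^G|\pmod 2$. Because $|S|$ is odd, $|S^G|$ is odd, hence nonzero, and $S$ contains a $G$-fixed point. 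I do not expect any obstacle here; this is the standard $p$-group fixed-point congruence with $p=2$.

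For the ``in particular'' clause, I would first make precise what a support-fixed local mechanism is. Following the template of \Cref{thm:no-selector-Bi}, it consists of the following data: a support $A$ of the putative selector $\dot f$, a condition $q$, and a $3$-element set $T$ in the family. The aim is to obtain a contradiction by finding automorphisms $\pi\in\Fix^{\mathrm{1cas}}_{\rho}(A)$ with $\pi(q)=q$ and $\pi(T)=T$ that move the value $q$ forces for $\dot f(T)$. Calling the mechanism ``local'' means that the relevant group of such $\pi$ acts on $T$ through a finite quotient generated by finitely many one-step cascade generators on a finite $\rho$-window $K$. As remarked at the start of \Cref{sec:odd}, that quotient is an elementary abelian $2$-group, being the image of a subgroup of $\Ftwo^K$ under the star action of \Cref{lem:starspan}, and it acts on the finite set $T$.

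The intended application takes $S=T$ with $|S|=3$, or more generally any odd $n$. The first assertion then produces a point $t\in T$ fixed by every automorphism in the mechanism. The obstruction pattern of \Cref{thm:no-selector-Bi} needs an automorphism that fixes $q$ and $\dot f$ but moves the decided value. If $q$ decides $\dot f(T)=t$ with $t$ fixed, no element of the mechanism moves this value, so no contradiction arises. More strongly, no mechanism of this kind can exclude choosing a fixed point, since the fixed points are exactly the invariantly definable candidates. The hard part is not mathematical. It is stating the second sentence precisely enough to be a genuine claim, and I would present it as a heuristic consequence of the first sentence, framed in terms of the action factoring through a finite $2$-group, not as a theorem about $\Ncas$ itself.
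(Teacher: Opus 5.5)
Your proof of the fixed-point statement is correct and uses the same orbit--stabilizer argument as the paper: orbits of a $2$-group have power-of-two size, so an odd $|S|$ forces an orbit of size $1$, and your class-equation version gives the slightly sharper congruence $|S^G|\equiv |S|\pmod 2$. The paper also gives no formal proof of the ``in particular'' clause, so treating it as a heuristic consequence of the fixed-point principle, as you do, matches the paper's intent.
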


\begin{proof}
Every $G$-orbit has cardinality a power of $2$ by the orbit-stabilizer theorem. Since $|S|$ is odd, at least one orbit has odd size, hence size $1$. This yields a $G$-fixed point.
\end{proof}

\begin{proposition}[Bounded support-equivariant semantic quotients stay dyadic]\label{prop:dyadic-quotients}
Let $Q=a+V$ be a finite local orbit of an elementary abelian $2$-group acting by translations, and let $\pi:Q\to X$ be a support-equivariant quotient. Then $|X|$ is a power of $2$. In particular, no such bounded support-local quotient yields exactly three semantic values.
\end{proposition}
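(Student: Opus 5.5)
The plan is to read ``support-equivariant quotient'' as a surjection $\pi:Q\to X$ onto a set $X$ on which the same group $V$ acts, with
\[
\pi(v+q)=v\cdot\pi(q)\qquad\text{for all }v\in V,\ q\in Q.
\]
Under that reading the statement reduces to orbit-counting for $2$-groups, in the same spirit as \Cref{prop:odd-fixed}.

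\textbf{Step 1 ($V$ is a finite $2$-group).} Since $Q=a+V$ is the orbit of $a$ under translations, the map $v\mapsto a+v$ is a bijection $V\to Q$. Translation orbits are free, because $a+v=a$ forces $v=0$. Hence $|V|=|Q|$ is finite. A finite elementary abelian $2$-group is an $\Ftwo$-vector space of some finite dimension $d$, so $|V|=2^d$.

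\textbf{Step 2 ($X$ is a single $V$-orbit).} I would check that $V$ acts transitively on $X$. Given $x,x'\in X$, surjectivity of $\pi$ gives $q,q'\in Q$ with $\pi(q)=x$ and $\pi(q')=x'$. Transitivity on $Q$ gives $v\in V$ with $v+q=q'$. Equivariance then gives $v\cdot x=x'$. So $X$ is one orbit, namely $X=V\cdot\pi(a)$.

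\textbf{Step 3 (orbit--stabilizer).} By orbit--stabilizer,
\[
|X|=[V:\mathrm{Stab}_V(\pi(a))],
\]
which divides $|V|=2^d$. Hence $|X|$ is a power of $2$. In particular $|X|\neq 3$, which is the ``no three semantic values'' clause.

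\textbf{Main obstacle.} The only real difficulty is definitional. The statement does not say which group acts on $X$, and a mere surjection $Q\to X$ could have any size up to $|Q|$. I would therefore fix explicitly that support-equivariance means $X$ carries the induced action of the support-fixed local group, that is, the group $V$ generated by star vectors as in \Cref{lem:starspan}, and that $\pi$ intertwines the two actions.

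If instead one only assumes that $\pi$ is $V$-invariant on fibres, i.e.\ $\pi(q)=\pi(q')$ implies $\pi(v+q)=\pi(v+q')$, then $V$ still acts on $X$ by the well-defined rule $v\cdot\pi(q)=\pi(v+q)$. Steps 2 and 3 then go through unchanged. I would state this reduction at the start so that both readings are covered.
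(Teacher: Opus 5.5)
Your proof is correct and is essentially the paper's argument. The paper adopts your fallback reading, namely that support-equivariance means the fibre relation $q\sim q'\iff\pi(q)=\pi(q')$ is invariant under translation by $V$; it shows that $W=\{v\in V: q_0+v\sim q_0\}$ is a subspace whose affine cosets are exactly the fibres, and concludes $|X|=[V:W]$, which is your orbit--stabilizer count done by hand, since $W$ is precisely $\mathrm{Stab}_V(\pi(q_0))$ for the induced action $v\cdot\pi(q)=\pi(v+q)$.
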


\begin{proof}
Write $Q=a+V$ where $V$ is a finite-dimensional $\Ftwo$-vector space acting by translations. Let $\sim$ be the equivalence relation on $Q$ induced by $\pi$:
\[
q\sim q' \iff \pi(q)=\pi(q').
\]
Support-equivariance means exactly that $\sim$ is translation-invariant under $V$: if $q\sim q'$, then $q+v\sim q'+v$ for every $v\in V$.

Fix $q_0\in Q$ and let
\[
W=\{v\in V:q_0+v\sim q_0\}.
\]
Because $\sim$ is translation-invariant, $W$ is a linear subspace of $V$: it contains $0$, and if $v,w\in W$ then
\[
q_0+(v+w)=(q_0+v)+w\sim q_0+w\sim q_0.
\]
Now for any $q,q'\in Q$, write $q'=q+v$ for the unique $v\in V$. Then
\[
q\sim q' \iff v\in W.
\]
So the equivalence classes are exactly the affine cosets of $W$ in $Q$, each class has size $|W|$, and the quotient size is
\[
|X|=[V:W],
\]
a power of $2$. In particular, $X$ cannot have size $3$.
\end{proof}

A \emph{trace window} for the present analysis is a finite $\rho$-window $W$ viewed as the domain on which packets are evaluated. Given a $3$-element family $F=\{x_0,x_1,x_2\}$ whose elements are determined by finite packet data inside $W$, the \emph{trace profile} of $x_j$ in $W$ is the finite subset $P_j\subseteq W$ recording the nodes at which the packet data defining $x_j$ is supported. We say that $F$ is \emph{trace-separated in $W$} if the profiles $P_0,P_1,P_2$ are pairwise distinct, and that $F$ is \emph{bounded} if it is presented inside a single finite trace window. In this situation the selector below is witnessed by a hereditarily symmetric $\mathbb P_1$-name $\dot f$ supported by $W$: in each generic extension, $\dot f$ evaluates to the unique $x_j$ whose profile code $P_j$ has lexicographically minimal characteristic function in the fixed ordering of $W$. Since that comparison is definable from $W$ and all packets in the presentation have node-support contained in $W$, the name $\dot f$ is supported by $W$ and is hereditarily symmetric.

\begin{proposition}[Trace-separated bounded rigid ternary families are canonically selectable]\label{prop:canonical-selector}
Let $F=\{x_0,x_1,x_2\}$ be a bounded rigid $3$-element family, meaning that no automorphism of $\Gcas$ permutes the elements of $F$ nontrivially while fixing its support, presented inside a fixed finite trace window $W$. Suppose that each $x_j$ determines a finite trace profile $P_j\subseteq W$, and that the three profiles are pairwise distinct. Then $F$ carries a support-definable canonical selector obtained by lexicographically minimizing the profile codes.
\end{proposition}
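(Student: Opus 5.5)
The plan is to build the selector explicitly from the finite combinatorics of $W$, check that it is a genuine choice function on $F$, and then verify symmetry using only that $W$ is a finite $\rho$-window fixed by $\Fix^{\mathrm{1cas}}_{\rho}(W)$.

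\emph{Step 1: define the selector.} Fix once and for all the enumeration of $W$ (a finite set of countable ordinals, so its increasing enumeration is canonical and lies in the ground model). For each $x_j$ form the characteristic vector $\mathbf 1_{P_j}\in\Ftwo^W$, read as a binary word in the increasing order of $W$. Since the $P_j$ are pairwise distinct subsets of $W$, the three words are pairwise distinct. The lexicographic order on $\Ftwo^W$ is a strict linear order, so there is a unique index $j^\ast$ with $\mathbf 1_{P_{j^\ast}}$ lexicographically minimal. Set $f(F)=x_{j^\ast}$. Then $f(F)\in F$, so $f$ is a choice function on the single-element family $\{F\}$, and likewise on any family of such $F$ presented in the same window.

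\emph{Step 2: show the name is hereditarily symmetric.} Take $\pi\in\Fix^{\mathrm{1cas}}_{\rho}(W)$. By definition $\pi$ acts trivially on every row above $\clrho(W)=W$. The packet data presenting each $x_j$ has node-support contained in $W$, so $\pi$ fixes every packet. Hence $\pi(\dot x_j)=\dot x_j$ by the same argument as \Cref{prop:packet-hs}, and $\pi$ also fixes the profiles $P_j$, which are ground-model finite subsets of $W$. The name $\dot f$ is built from $\check W$, the names $\dot x_j$, and the ground-model lexicographic comparison, so $\pi(\dot f)=\dot f$. Since $W$ is finite, hence countable and $\rho$-closed, this gives $\dot f\in\HS$ with support $W$.

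\emph{Step 3: role of rigidity.} If some $\pi$ fixing the support of $F$ permuted $F$ nontrivially, the selected element would have to be moved. Rigidity rules this out and makes the selection consistent with the action. Under the trace-separation hypothesis this consistency in fact already follows from the invariance of the $P_j$ shown in Step 2, so rigidity serves as a consistency check rather than a separate input.

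The main obstacle is making sure the trace profile $P_j$ is an invariant of the element $x_j$ rather than of a particular presentation of it. If two different packet presentations of the same $x_j$ could yield different profiles, $f$ would not be well defined. The fix is to work with the fixed presentation inside $W$, as the hypothesis ``presented inside a fixed finite trace window'' allows. Then $P_j$ is attached to the presenting name, and those names are fixed pointwise by $\Fix^{\mathrm{1cas}}_{\rho}(W)$. I would state this dependence on the presentation explicitly, and avoid claiming canonicity independent of the presentation.
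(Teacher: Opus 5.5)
Your proposal is correct and takes essentially the same approach as the paper. You code each profile $P_j$ by its characteristic function in the fixed order on $W$, select the $x_j$ with the unique lexicographically least code, and note that the resulting name is supported by $W$ because every packet in the presentation has node-support inside $W$. Your remarks that rigidity plays no real role and that the codes depend on the chosen presentation also agree with the paper, whose argument never uses rigidity and calls the codes ``support-definable from the presentation inside $W$''.
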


\begin{proof}
Because $W$ is finite, each profile $P_j\subseteq W$ is finite and can be coded by its characteristic function in the fixed lexicographic order on $W$. These binary profile codes are support-definable from the presentation inside $W$. By hypothesis the three codes are pairwise distinct, so one of them is strictly lexicographically least. Selecting the corresponding $x_j$ yields a support-definable canonical selector on $F$.
\end{proof}

\begin{remark}
Propositions~\ref{prop:odd-fixed}, \ref{prop:dyadic-quotients}, and \ref{prop:canonical-selector} do \emph{not} prove $\Cthree$, and they do \emph{not} prove $\neg\Cthree$. They show only that the obvious local binary routes are exhausted inside the present bounded packet calculus. The third proposition is intentionally conditional: what is ruled out at present are bounded rigid ternary presentations that are already trace-separated inside one finite window. Any future route to $\neg\Cthree$ appears to require an unbounded semantic quotient, a genuinely nonlocal obstruction, or some new mechanism that breaks the dyadic character of the current analysis.
\end{remark}

\section{Open problems}\label{sec:open}

The present paper isolates a new one-step cascade geometry and settles its even exact-cardinality profile, but several natural structural questions remain open.

\begin{enumerate}[label=\textup{(\arabic*)},leftmargin=2.4em]
    \item \textbf{The odd exact-cardinality profile.} The first open case is whether $\Ncas\models \neg\Cthree$. The results of \Cref{sec:odd} show that the current bounded binary packet calculus does not decide this question, but they do not rule out either a deeper obstruction or a positive odd-cardinality phenomenon.
    \item \textbf{Higher-rank normalization.} \Cref{thm:rank1norm} gives a packet normalization theorem for rank-$1$ hereditarily symmetric reals. Does the cascade geometry admit an analogous normalization scheme for higher-rank names, or is rank $1$ the natural endpoint of the present finite star-span analysis?
    \item \textbf{Strength of low-rank coding.} The two-layer coding consequence of \Cref{thm:twolayer} suggests that the cascade geometry may organize low-rank information more tightly than geometries whose automorphism groups remain rowwise local and have uniformly countable total support. Is there a precise low-rank coding or Kinna--Wagner separation theorem that witnesses a structural distinction of that kind?
\end{enumerate}

\section{Relation to previous work}\label{sec:related}

The present construction sits in four overlapping literatures.

First, at the level of symmetric-extension architecture, the paper should be read against Grigorieff's analysis of intermediate submodels, Karagila's framework for iterating symmetric extensions, and the recent reformulation of symmetric systems, quotients, completions, and equivalences in \cite{Grigorieff1975,Karagila2019Iterating,KaragilaSchilhan2026Theory}. The novelty here is not a new general framework, but a specific one-step symmetry geometry whose local combinatorics are governed by $\rho$-closed windows and one-step stars.

Second, although the present paper is not an iteration paper, its support-local transport arguments are conceptually close to the upwards-homogeneity line of work: one isolates hypotheses under which higher-stage symmetry does not create genuinely new subsets of an earlier model. Our $\rho$-closed support reductions play a different role and are proved directly in one step, but they belong to the same family of questions about how much of a name is already visible on a controlled support; compare \cite{RyanSmithSchilhanWei2026Upwards}.

Third, the anti-$\BPI$ conclusion should be compared with both the classical symmetric-model landscape and the recent dynamical analysis of $\BPI$. Classical Cohen-type and Mostowski-type examples show that choicelessness can arise from highly permutation-driven geometries; see the surveys in \cite{Jech1973AC,Felgner1971}. By contrast, Ransom's recent work studies sufficient dynamical conditions --- via the filter extension property and related Ramsey-type hypotheses --- under which $\BPI$ \emph{holds} in symmetric extensions \cite{Ransom2025BPI}. The present paper does not engage the filter extension property directly. Its contribution is instead a direct failure-of-$\BPI$ theorem in a different one-step geometry, extracted from the same fresh-support shielding mechanism that later drives the even exact-cardinality profile.

Fourth, the packet-normalization and coding consequences place the model near the Kinna--Wagner/intermediate-model interface studied in \cite{KaragilaSchilhan2026IMKWP,HayutShani2024}. The even exact-cardinality conclusions should be viewed in the standard weak-choice landscape recorded in \cite{Jech1973AC,Jech2003,HowardRubin1998}. What is specific to the present paper is that the same local binary geometry controls both sides of the argument: finite star-spans and packet normalization explain how rank-$1$ information compresses to finite $\rho$-windows, while fresh support outside a fixed window gives the complement-pair obstruction to choice on the even side.

\bibliographystyle{alpha}
\bibliography{cascade_model_refs_v15}

\end{document}